\newtheorem{theorem}{Theorem}
\newtheorem{definition}[theorem]{Definition}
\newtheorem{lemma}[theorem]{Lemma}
\newtheorem{proposition}[theorem]{Proposition}
\newtheorem{remark}[theorem]{Remark}
\newenvironment{proof}[1][Proof]{\noindent\textbf{#1.} }{\ \rule{0.5em}{0.5em}}
\begin{document}

\title{Ultrafunctions and applications}
\author{Vieri Benci\thanks{
Dipartimento di Matematica Applicata, Universit\`{a} degli Studi di Pisa,
Via F. Buonarroti 1/c, Pisa, ITALY and Department of Mathematics, College of
Science, King Saud University, Riyadh, 11451, SAUDI ARABIA. e-mail: \texttt{%
benci@dma.unipi.it}}, Lorenzo Luperi Baglini}
\maketitle

\begin{abstract}
This paper deals with a new kind of generalized functions, called
"ultrafunctions" which have been introduced recently \cite{ultra} and
developed in \cite{belu2012} and \cite{belu2013}. Their peculiarity is that
they are based on a Non-Archimedean field namely on a field which contains
infinite and infinitesimal numbers. Ultrafunctions have been introduced to
provide generalized solutions to equations which do not have any solutions
not even among the distributions. Some of these applications will be
presented in the second part of this paper.

\medskip

\noindent \textbf{Keywords}. Non Archimedean Mathematics, Non Standard
Analysis, ultrafunctions, generalized solutions, critical points,
differential operator, boundary value problem, material point.
\end{abstract}

\tableofcontents
\section{Introduction}

In many circumstances, the notion of function is not sufficient to the needs
of a theory and it is necessary to extend it. We can recall, for example,
the heuristic use of symbolic methods, called operational calculus. A basic
book on operational calculus was Oliver Heaviside's Electromagnetic Theory
of 1899 \cite{H99}. Since justifications were not rigorous, these methods
had a bad reputation among the pure mathematics. The professional
mathematicians accepted for the first time a notion of generalized function
with the introduction of the Lebesgue integral. An integrable function, in
Lebesgue's theory, is equivalent to any other which is the same almost
everywhere. This means that its value at a given point is meaningless and
the centrality of the concept of function was replaced by that of
"equivalence classes of functions". Further, very important steps in this
direction have been the introduction of the weak derivative and of the Dirac
Delta function. The theory of Dirac and the theory of weak derivatives where
unified by Schwartz in the beautiful theory of distributions (see e.g. \cite%
{Sw51} and \cite{Sw66}), also thanks to the previous work of Leray and
Sobolev. Among people working in partial differential equations, the theory
of Schwartz has been accepted as definitive (at least until now), but other
notions of generalized functions have been introduced by Colombeau \cite%
{col85} and Mikio Sato \cite{sa59}, \cite{sa60}.

This paper deals with a new kind of generalized functions, called
"ultrafunctions", which have been introduced recently in \cite{ultra} and
developed in \cite{belu2012} and \cite{belu2013}. The peculiarity of
ultrafunctions is that they are based on the Non-Archimedean Mathematics
(NAM). NAM is mathematics on fields which contain infinite and infinitesimal
numbers (Non-Archimedan fields). In the years around 1900, NAM was
investigated by prominent mathematicians such as Du Bois-Reymond \cite{DBR},
Veronese \cite{veronese}, David Hilbert \cite{hilb} and Tullio Levi-Civita 
\cite{LC}, but then it has been forgotten until the '60s when Abraham
Robinson presented his Non Standard Analysis (NSA) \cite{rob}. We refer to
Ehrlich \cite{el06} for a historical analysis of these facts and to Keisler 
\cite{keisler76} for a very clear exposition of NSA.

The ultrafunctions have been introduced to provide generalized solutions to
equations which do not have any solutions not even among the distributions.

The main peculiarities of ultrafunctions, as presented in this paper, are
the following:

\begin{itemize}
\item the ultrafunctions are functions which take their value in $\mathbb{R}%
^{\ast },$ which is a Non Archimedean field containing $\mathbb{R}$; this
field is a nonstandard extension of $\mathbb{R}$ (cf. e.g. \cite{keisler76});

\item any real integrable function $f:\Omega \rightarrow \mathbb{R\ (}\Omega
\subseteq \mathbb{R}^{N})$ can be extended to an ultrafunction 
\begin{equation*}
\widetilde{f}:\Omega ^{\mathbb{\ast }}\rightarrow \mathbb{R}^{\mathbb{\ast }%
},
\end{equation*}%
where $\Omega \subset \Omega ^{\mathbb{\ast }}\subset \left( \mathbb{R}^{%
\mathbb{\ast }}\right) ^{N};$

\item to any distribution $T\in \mathcal{D}^{\prime }\left( \Omega \right) $
we can associate an ultrafunction%
\begin{equation*}
\widetilde{T}:\Omega ^{\mathbb{\ast }}\rightarrow \mathbb{R}^{\mathbb{\ast }}
\end{equation*}%
such that, $\forall \varphi \in \mathcal{D}\left( \Omega \right) ,$%
\begin{equation*}
\left\langle T,\varphi \right\rangle =\int^{\ast }\widetilde{T}(x)\widetilde{%
\varphi }(x)dx,
\end{equation*}%
where $\int^{\ast }$ is a suitable extension of the definite integral to the
ultrafunctions;

\item any functional $J$ defined on a function space $V(\Omega )$ can be
extended to a functional $\widetilde{J}$ defined on a ultrafunction space $%
V_{\Lambda }(\Omega )\supset V(\Omega );$ moreover, if $J$ is coercive, $%
\widetilde{J}$ has a minimum in $V_{\Lambda }(\Omega );$

\item any linear or nonlinear differential operator $A(u)$ defined on a
suitable function space $V(\Omega )$ can be extended to an operator%
\begin{equation*}
\widetilde{A}:V_{\Lambda }(\Omega )\rightarrow V_{\Lambda }(\Omega )
\end{equation*}%
and the equation%
\begin{equation*}
\widetilde{A}(u)=\widetilde{f}
\end{equation*}%
might have a solution in $V_{\Lambda }(\Omega )$ even if the equation $%
A(u)=f $ has no solution in $V(\Omega );$

\item the main strategy to prove the existence of generalized solutions in
the space of ultrafunctions is relatively simple; it is just a variant of
the Faedo-Galerkin method.
\end{itemize}

The theory of ultrafunctions makes a large use of the techniques of NSA.
However, our approach to Non Archimedean Mathematics is quite different from
the spirit of Nonstandard Analysis: there are two main differences, one in
the aims and one in the methods.

Let us examine the difference in the aims. We think that infinitesimal and
infinite numbers should not be considered just as entities living in a
parallel universe (the nonstandard universe) which are only a tool to prove
some statement relative to our universe (the standard universe). On the
contrary, we think that they are mathematical entities which have the same
status of the others and can be used to build models as any other
mathematical entity. Actually, in our opinion, the advantages of a theory
which includes infinitesimals rely more on the possibility of making new
models rather than in the proving techniques. Our papers \cite{BGG} and \cite%
{nap}, as well as this one, are inspired by this principle. Actually, this point of view is not completely 
new: e.g., in the seventies and the first
years of the eighties some mathematical models involving infinitesimals for
economics and phisics have been constructed (see e.g. \cite{brown} and \cite%
{goze}). 

As far as the methods are concerned, we introduce a non-Archimedean field
via a new notion of limit (the $\Lambda $-limit, see section \ref{OL}). In
general, the $\Lambda $-limit of a sequence of "mathematical entities" is a
new object which preserves most of the properties of the "approximating
objects". This kind of new objects are called \textbf{internal} and, in
general, they are not present in the usual Mathematics. Infinite and
infinitesimal numbers, as well as the ultrafunctions, are internal objects.
Moreover, this notion of limit allows us to make a very limited use of the
formal logic: the Transfer Principle (or Leibnitz Principle) is given by Th. %
\ref{billo} and it is not necessary to introduce a formal language.\\
Moreover, another difference between our approach and the traditional one is that we do not assume the existence of two distinct mathematical universes. This idea is shared with the Nelson's approach to NSA called  \textbf{Internal Set Theory} (see \cite{nelson}). Nevertheless, our theory and IST have many differencies: e.g., in IST it is postulated the existence of 
infinitesimal elements in $\mathbb{R}$ itself, while we do not change the nature of this set. 

\subsection{Notations}

Let $\Omega $\ be a subset of $\mathbb{R}^{N};$ in concrete cases $\Omega $
will be an open set or the closure of an open set: then

\begin{itemize}
\item $\mathcal{F}\left( \Omega ,E\right) $ denotes the set of the functions
defined in $\Omega $ with values in $E;$

\item $\mathcal{F}\left( \Omega \right) $ denotes the set of the real
functions defined in $\Omega ;$

\item $\mathcal{C}\left( \overline{\Omega }\right) $ denotes the set of real
continuous functions defined on $\overline{\Omega };$

\item $\mathcal{C}_{0}\left( \overline{\Omega }\right) $ denotes the set of
real continuous functions on $\overline{\Omega }$ such that $u(x)=0$ on $%
\partial \Omega ;$

\item $\mathcal{C}^{k}\left( \Omega \right) $ denotes the set of real
functions defined on $\Omega \subset \mathbb{R}^{N}$ which have continuous
derivatives up to the order $k;$

\item $\mathcal{D}\left( \Omega \right) $ denotes the set of infinitely
differentiable real functions with compact support defined on $\Omega
\subset \mathbb{R}^{N};\ \mathcal{D}^{\prime }\left( \Omega \right) $
denotes the topological dual of $\mathcal{D}\left( \Omega \right) $, namely
the space of distributions on $\Omega ;$

\item $W^{1,p}(\Omega )$ is the usual Sobolev space defined as the set of
functions $u\in L^{p}\left( \Omega \right) $ such that $\nabla u\in
L^{p}\left( \Omega \right) ;$

\item $H^{1}(\Omega )=W^{1,2}(\Omega );$

\item $W_{0}^{1,p}(\Omega )$ is the closure of $\mathcal{D}\left( \Omega
\right) $ in $W^{1,p}(\Omega );$

\item $H_{0}^{1}(\Omega )=W_{0}^{1,2}(\Omega );$

\item $H^{-1}(\Omega )$ is the topological dual of $H_{0}^{1}(\Omega ).$
\end{itemize}

\section{$\Lambda $-theory\label{lt}}

In this section we present the basic notions of Non Archimedean Mathematics
and of Nonstandard Analysis following a method inspired by \cite{BDN2003}
(see also \cite{benci95}, \cite{benci99}, \cite{ultra} and \cite{belu2012}).

\subsection{Non Archimedean Fields\label{naf}}

Here, we recall the basic definitions and facts regarding Non Archimedean
fields. In the following, ${\mathbb{K}}$ will denote an ordered field. We
recall that such a field contains (a copy of) the rational numbers. Its
elements will be called numbers.

\begin{definition}
Let $\mathbb{K}$ be an ordered field. Let $\xi \in \mathbb{K}$. We say that:

\begin{itemize}
\item $\xi $ is infinitesimal if, for all positive $n\in \mathbb{N}$, $|\xi
|<\frac{1}{n}$;

\item $\xi $ is finite if there exists $n\in \mathbb{N}$ such that $|\xi |<n$%
;

\item $\xi $ is infinite if, for all $n\in \mathbb{N}$, $|\xi |>n$
(equivalently, if $\xi $ is not finite).
\end{itemize}
\end{definition}

\begin{definition}
An ordered field $\mathbb{K}$ is called Non-Archimedean if it contains an
infinitesimal $\xi \neq 0$.
\end{definition}

It's easily seen that all infinitesimal are finite, that the inverse of an
infinite number is a nonzero infinitesimal number, and that the inverse of a
nonzero infinitesimal number is infinite.

\begin{definition}
A superreal field is an ordered field $\mathbb{K}$ that properly extends $%
\mathbb{R}$.
\end{definition}

It is easy to show, due to the completeness of $\mathbb{R}$, that there are
nonzero infinitesimal numbers and infinite numbers in any superreal field.
Infinitesimal numbers can be used to formalize a new notion of "closeness":

\begin{definition}
\label{def infinite closeness} We say that two numbers $\xi ,\zeta \in {%
\mathbb{K}}$ are infinitely close if $\xi -\zeta $ is infinitesimal. In this
case we write $\xi \sim \zeta $.
\end{definition}

Clearly, the relation "$\sim $" of infinite closeness is an equivalence
relation.

\begin{theorem}
If $\mathbb{K}$ is a superreal field, every finite number $\xi \in \mathbb{K}
$ is infinitely close to a unique real number $r\sim \xi $, called the 
\textbf{shadow} or the \textbf{standard part} of $\xi $.
\end{theorem}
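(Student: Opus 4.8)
The plan is to use a Dedekind-cut argument to produce the real number, then exploit the Archimedean property of $\mathbb{R}$ together with the definition of infinitesimal to establish uniqueness. Fix a finite $\xi \in \mathbb{K}$. First I would define the set $A = \{ q \in \mathbb{Q} : q < \xi \}$, where the inequality is taken in the ordered field $\mathbb{K}$ (recall $\mathbb{K} \supseteq \mathbb{Q}$ since it is an ordered field, and $\mathbb{Q} \subseteq \mathbb{R} \subseteq \mathbb{K}$). Because $\xi$ is finite, there is $n \in \mathbb{N}$ with $|\xi| < n$, so $A$ is nonempty (it contains $-n$) and bounded above in $\mathbb{Q}$ (by $n$); hence $A$ determines a real number, and I set $r = \sup A$ computed in $\mathbb{R}$, using the completeness of $\mathbb{R}$.

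Next I would show $r \sim \xi$, i.e. that $\xi - r$ is infinitesimal. Suppose not: then there is a positive $n \in \mathbb{N}$ with $|\xi - r| \geq \tfrac{1}{n}$. Consider the two cases $\xi - r \geq \tfrac{1}{n}$ and $\xi - r \leq -\tfrac{1}{n}$. In the first case, by density of $\mathbb{Q}$ in $\mathbb{R}$ one can pick a rational $q$ with $r < q < r + \tfrac{1}{n} \leq \xi$, so $q \in A$ yet $q > r = \sup A$, a contradiction. In the second case, pick a rational $q$ with $\xi \leq r - \tfrac{1}{n} < q < r$; then $q \notin A$ (since $q > \xi$) but $q < r = \sup A$, which forces some element of $A$ to exceed $q$, again contradicting $q > \xi \geq$ every element of $A$. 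Hence $|\xi - r| < \tfrac{1}{n}$ for every $n$, so $\xi \sim r$.

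For uniqueness, suppose $r, s \in \mathbb{R}$ both satisfy $r \sim \xi$ and $s \sim \xi$. Since "$\sim$" is an equivalence relation (noted just before the statement), $r \sim s$, so $r - s$ is infinitesimal in $\mathbb{K}$. But $r - s \in \mathbb{R}$, and the only infinitesimal real number is $0$: indeed if $r - s \neq 0$ then $|r-s|$ is a positive real, and by the Archimedean property of $\mathbb{R}$ there is $n \in \mathbb{N}$ with $|r - s| > \tfrac{1}{n}$, contradicting infinitesimality. Thus $r = s$.

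I expect the only delicate point to be handling the order-theoretic bookkeeping cleanly: one must be careful that the supremum is taken in $\mathbb{R}$ while the comparisons $q < \xi$ live in $\mathbb{K}$, and that density arguments in $\mathbb{R}$ transfer to the needed inequalities in $\mathbb{K}$ (which they do, since $\mathbb{Q} \hookrightarrow \mathbb{R} \hookrightarrow \mathbb{K}$ is an ordered-field embedding, so rational comparisons agree in all three). Everything else is routine; no appeal to the $\Lambda$-limit machinery or to any nonstandard apparatus is required, only completeness and the Archimedean property of $\mathbb{R}$ together with the hypothesis that $\mathbb{K}$ is superreal (which guarantees the construction makes sense and, via the previous remarks, that infinitesimals and infinities behave as expected).
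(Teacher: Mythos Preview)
The paper does not actually supply a proof of this theorem; it is stated as a known elementary fact about superreal fields, and the exposition moves on immediately to the notation $sh(\xi)$. Your Dedekind-cut argument is the standard proof and is correct: the completeness of $\mathbb{R}$ gives existence of $r=\sup A$, density of $\mathbb{Q}$ in $\mathbb{R}$ together with the ordered-field embedding $\mathbb{Q}\subset\mathbb{R}\subset\mathbb{K}$ gives $\xi\sim r$, and the Archimedean property of $\mathbb{R}$ forces uniqueness. The only cosmetic point is that in the second case of the closeness argument you could phrase the contradiction more directly by saying that $q<r=\sup A$ implies some $a\in A$ satisfies $a>q$, whence $q<a<\xi$, contradicting $\xi\le r-\tfrac{1}{n}<q$; but your version already contains this idea.
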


Given a finite number $\xi $, we denote its shadow as $sh(\xi )$, and we put 
$sh(\xi )=+\infty $ ($sh(\xi )=-\infty $) if $\xi \in \mathbb{K}$ is a
positive (negative) infinite number.

\subsection{The $\Lambda $-limit\label{OL}}

In this section we will introduce a superreal field $\mathbb{K}$ and we will
analyze its main properties by mean of the $\Lambda $-theory (for complete
proofs, the reader is referred to \cite{ultra}, \cite{belu2012}).

We set
\begin{equation*}
\mathfrak{L}=\mathcal{P}_{\omega }(\mathbb{R}^{N});
\end{equation*}%
where $\mathcal{P}_{\omega }(\mathbb{R}^{N})$ denotes the family of finite
subsets of $\mathbb{R}^{N}$. We will refer to $\mathfrak{L}$ as the
"parameter space". Clearly $\left( \mathfrak{L},\subset \right) $ is a
directed set. We recall that a directed set is a partially ordered set $%
(D,\prec )$ such that, $\forall a,b\in D,\ \exists c\in D$ such that 
\begin{equation*}
a\prec c\ \ \text{and}\ \ b\prec c.
\end{equation*}%
A function $\varphi :D\rightarrow E$ defined on a directed set will be
called \textit{net }(with values in $E$). A net $\varphi $ is the
generalization of the notion of sequence and it has been constructed in such
a way that the Weierstrass definition of limit makes sense: if $\varphi
_{\lambda }$ is a real net, we have that 
\begin{equation*}
\underset{\lambda \rightarrow \infty }{\lim }\varphi _{\lambda }=L
\end{equation*}%
if and only if 
\begin{equation}
\forall \varepsilon >0\text{ }\exists \lambda _{0}>0\text{\ such that, }%
\forall \lambda >\lambda _{0},\ \left\vert \varphi _{\lambda }-L\right\vert
<\varepsilon .  \label{limite}
\end{equation}

The key notion of the $\Lambda $-theory is the $\Lambda $-limit. Also the $%
\Lambda $-limit is defined for real nets but differs from the limit defined
by (\ref{limite}) mainly for the fact that there exists a Non Archimedean
field in which every real net admits a limit.

Now, we will present the notion of $\Lambda $-limit axiomatically:

\bigskip

{\Large Axioms of\ the }$\Lambda ${\Large -limit}

\begin{itemize}
\item \textsf{(}$\Lambda $-\textsf{1)}\ \textbf{Existence Axiom.}\ \textit{%
There is a superreal field} $\mathbb{K}\supset \mathbb{R}$ \textit{such that
every net }$\varphi :\mathfrak{L}\rightarrow \mathbb{R}$\textit{\ has a
unique limit }$L\in \mathbb{K}{\ }($\textit{called the} "$\Lambda $-limit" 
\textit{of}\emph{\ }$\varphi .)$ \textit{The} $\Lambda $-\textit{limit of }$%
\varphi $\textit{\ will be denoted as} 
\begin{equation*}
L=\lim_{\lambda \uparrow \Lambda }\varphi (\lambda ).
\end{equation*}%
\textit{Moreover we assume that every}\emph{\ }$\xi \in \mathbb{K}$\textit{\
is the }$\Lambda $-\textit{limit\ of some real function}\emph{\ }$\varphi :%
\mathfrak{L}\rightarrow \mathbb{R}$\emph{. }

\item ($\Lambda $-2)\ \textbf{Real numbers axiom}. \textit{If }$\varphi
(\lambda )$\textit{\ is} \textit{eventually} \textit{constant}, \textit{%
namely} $\exists \lambda _{0}\in \mathfrak{L},r\in \mathbb{R}$ such that $%
\forall \lambda \supset \lambda _{0},\ \varphi (\lambda )=r,$ \textit{then}%
\begin{equation*}
\lim_{\lambda \uparrow \Lambda }\varphi (\lambda )=r.
\end{equation*}

\item ($\Lambda $-3)\ \textbf{Sum and product Axiom}.\ \textit{For all }$%
\varphi ,\psi :\mathfrak{L}\rightarrow \mathbb{R}$\emph{: }%
\begin{eqnarray*}
\lim_{\lambda \uparrow \Lambda }\varphi (\lambda )+\lim_{\lambda \uparrow
\Lambda }\psi (\lambda ) &=&\lim_{\lambda \uparrow \Lambda }\left( \varphi
(\lambda )+\psi (\lambda )\right) ; \\
\lim_{\lambda \uparrow \Lambda }\varphi (\lambda )\cdot \lim_{\lambda
\uparrow \Lambda }\psi (\lambda ) &=&\lim_{\lambda \uparrow \Lambda }\left(
\varphi (\lambda )\cdot \psi (\lambda )\right) .
\end{eqnarray*}
\end{itemize}

The proof that this set of axioms $\{$($\Lambda $-1)\textsf{,}($\Lambda $%
-2),($\Lambda $-3)$\}$ is consistent can be found e.g. in \cite{ultra} or in 
\cite{belu2013}.

\subsection{Natural extension of sets and functions}

The notion of $\Lambda $-limit can be extended to sets and functions in the
following way:

\begin{definition}
Let $E_{\lambda },$ $\lambda \in \mathfrak{L},$ be a family of sets in $%
\mathbb{R}^{N}.$ We pose%
\begin{equation*}
\lim_{\lambda \uparrow \Lambda }\ E_{\lambda }:=\left\{ \lim_{\lambda
\uparrow \Lambda }\psi (\lambda )\ |\ \psi (\lambda )\in E_{\lambda
}\right\} .
\end{equation*}%
A set which is a $\Lambda $-\textit{limit\ is called \textbf{internal}.} In
particular, if $\forall \lambda \in \mathfrak{L,}$ $E_{\lambda }=E,$ we set $%
\lim_{\lambda \uparrow \Lambda }\ E_{\lambda }=E^{\ast },\ $namely%
\begin{equation*}
E^{\ast }:=\left\{ \lim_{\lambda \uparrow \Lambda }\psi (\lambda )\ |\ \psi
(\lambda )\in E\right\} .
\end{equation*}%
$E^{\ast }$ is called the \textbf{natural extension }of $E.$
\end{definition}

Notice that, while the $\Lambda $-limit\ of a constant sequence of numbers
gives this number itself, a constant sequence of sets gives a larger set,
namely $E^{\ast }$. In general, the inclusion $E\subseteq E^{\ast }$ is
proper.

This definition, combined with axiom ($\Lambda $-1$)$, entails that 
\begin{equation*}
\mathbb{K}=\mathbb{R}^{\ast }.
\end{equation*}

Given any set $E,$ we can associate to it two sets: its natural extension $%
E^{\ast }$ and the set $E^{\sigma },$ where%
\begin{equation}
E^{\sigma }=\left\{ x^{\ast }\ |\ x\in E\right\} .  \label{sigmaS}
\end{equation}

Clearly $E^{\sigma }$ is a copy of $E;$ however it might be different as set
since, in general, $x^{\ast }\neq x.$ Moreover $E^{\sigma }\subset E^{\ast }$
since every element of $E^{\sigma }$ can be regarded as the $\Lambda $%
-limit\ of a constant sequence.

\begin{definition}
Let 
\begin{equation*}
f_{\lambda }:\ E_{\lambda }\rightarrow \mathbb{R},\ \ \lambda \in \mathfrak{L%
},
\end{equation*}%
be a family of functions. We define a function%
\begin{equation*}
f:\left( \lim_{\lambda \uparrow \Lambda }\ E_{\lambda }\right) \rightarrow 
\mathbb{R}^{\ast }
\end{equation*}%
as follows: for every $\xi \in \left( \lim_{\lambda \uparrow \Lambda }\
E_{\lambda }\right) $ we pose%
\begin{equation*}
f\left( \xi \right) :=\lim_{\lambda \uparrow \Lambda }\ f_{\lambda }\left(
\psi (\lambda )\right) ,
\end{equation*}%
where $\psi (\lambda )$ is a net of numbers such that 
\begin{equation*}
\psi (\lambda )\in E_{\lambda }\ \ \text{and}\ \ \lim_{\lambda \uparrow
\Lambda }\psi (\lambda )=\xi .
\end{equation*}%
A function which is a $\Lambda $-\textit{limit\ is called \textbf{internal}.}
In particular if, $\forall \lambda \in \mathfrak{L,}$ 
\begin{equation*}
f_{\lambda }=f,\ \ \ \ f:\ E\rightarrow \mathbb{R},
\end{equation*}%
we set 
\begin{equation*}
f^{\ast }=\lim_{\lambda \uparrow \Lambda }\ f_{\lambda }.
\end{equation*}%
$f^{\ast }:E^{\ast }\rightarrow \mathbb{R}^{\ast }$ is called the \textbf{%
natural extension }of $f.$
\end{definition}

\subsection{Hyperfinite sets and hyperfinite sums\label{HE}}

\begin{definition}
An internal set is called \textbf{hyperfinite} if it is the $\Lambda $-limit
of a net $\varphi :\mathfrak{L}\rightarrow \mathfrak{Fin}$ where $\mathfrak{%
Fin}$ is a family of finite sets.
\end{definition}

For example the set 
\begin{equation*}
\lim_{\lambda \uparrow \Lambda }\lambda
\end{equation*}%
is hyperfinite; this set will be denoted by $\Lambda .$ By its definition we
have that%
\begin{equation*}
\Lambda =\left\{ \lim_{\lambda \uparrow \Lambda }\ x_{\lambda }\ |\
x_{\lambda }\in \lambda \right\} .
\end{equation*}

It is possible to add the elements of an hyperfinite set of numbers (or
vectors) as follows: let%
\begin{equation*}
A:=\ \lim_{\lambda \uparrow \Lambda }A_{\lambda }
\end{equation*}%
be an hyperfinite set of numbers (or vectors); then the hyperfinite sum of
the elements of $A$ is defined in the following way: 
\begin{equation*}
\sum_{a\in A}a=\ \lim_{\lambda \uparrow \Lambda }\sum_{a\in A_{\lambda }}a.
\end{equation*}%
In particular, if $A_{\lambda }=\left\{ a_{1}(\lambda ),...,a_{\beta
(\lambda )}(\lambda )\right\} \ $with\ $\beta (\lambda )\in \mathbb{N},\ $%
then setting 
\begin{equation*}
\beta =\ \lim_{\lambda \uparrow \Lambda }\ \beta (\lambda )\in \mathbb{N}%
^{\ast }
\end{equation*}%
we use the notation%
\begin{equation*}
\sum_{j=1}^{\beta }a_{j}=\ \lim_{\lambda \uparrow \Lambda }\sum_{j=1}^{\beta
(\lambda )}a_{j}(\lambda ).
\end{equation*}

\subsection{Qualified sets\label{qs}}

As one can expect, if two nets $\varphi ,\psi $ are equal on a "qualified"
subset of $\mathfrak{L}$ then they share the same $\Lambda $-limit. The
notion of "qualified" subset of $\mathfrak{L}$ can be precisely defined in
the following.

If $\mathfrak{Q}\subset \mathfrak{L}$ and $\varphi :\mathfrak{Q}\rightarrow
E $, the following notation is quite useful:%
\begin{equation*}
\Lambda \text{-}\lim_{\lambda \in \mathfrak{Q}}\varphi (\lambda
):=\lim_{\lambda \uparrow \Lambda }\widetilde{\varphi }(\lambda )
\end{equation*}%
where 
\begin{equation*}
\widetilde{\varphi }(\lambda )=\left\{ 
\begin{array}{cc}
\varphi (\lambda ) & \text{for}\ \ \lambda \in \mathfrak{Q}, \\ 
\emptyset & \text{for\ }\ \lambda \notin \mathfrak{Q}.%
\end{array}%
\right.
\end{equation*}%
Clearly, taking $\mathfrak{Q}=\mathfrak{L}$ , we have that%
\begin{equation*}
\Lambda \text{-}\lim_{\lambda \in \mathfrak{L}}\varphi (\lambda
)=\lim_{\lambda \uparrow \Lambda }\varphi (\lambda ).
\end{equation*}

In general, it is not difficult to prove that, for any set $\mathfrak{Q}%
\subset \mathfrak{L},$ it can occur only one of the two following
possibilities:%
\begin{eqnarray*}
\Lambda \text{-}\lim_{\lambda \in \mathfrak{Q}}\varphi (\lambda )
&=&\lim_{\lambda \uparrow \Lambda }\varphi (\lambda )\text{, or} \\
\Lambda \text{-}\lim_{\lambda \in \mathfrak{Q}}\varphi (\lambda )
&=&\emptyset .
\end{eqnarray*}

We use this notation to introduce the notion of qualified set:

\begin{definition}
\label{qua}We say that a set $\mathfrak{Q}\subset \mathfrak{L}$ is qualified
if, for every net $\varphi ,$ we have that 
\begin{equation*}
\Lambda \text{-}\lim_{\lambda \in \mathfrak{Q}}\varphi (\lambda
)=\lim_{\lambda \uparrow \Lambda }\varphi (\lambda ).
\end{equation*}
\end{definition}

By the above definition we have that the $\Lambda $-limit of a net $\varphi $
depends only on the values that $\varphi $ takes on a qualified set. It is
easy to see that (nontrivial) qualified sets exist. For example, by ($%
\Lambda $-2), we can deduce that, for every $\lambda _{0}\in \Lambda ,$ the
set%
\begin{equation*}
Q\left( \lambda _{0}\right) :=\left\{ \lambda \in \Lambda \ |\ \lambda
_{0}\subseteq \lambda \right\}
\end{equation*}%
is qualified. The family of qualified sets $\mathcal{U}$ satisfies the
following assumptions.

\begin{proposition}
\label{quaqua}The family of qualified sets forms a non principal ultrafilter 
$\mathcal{U}$, namely:

\begin{enumerate}
\item if $\mathfrak{Q}\in \mathcal{U}$ and $\mathfrak{Q}\subset \mathfrak{R,}
$ then $\mathfrak{R}\in \mathcal{U}$;

\item if $\mathfrak{Q}$ and $\mathfrak{R}\in \mathfrak{\mathcal{U}}$, then $%
\mathfrak{Q}\cap \mathfrak{R}\in \mathcal{U}$;

\item if $\mathfrak{Q}\in \mathfrak{\mathcal{U}}$, then $\mathfrak{Q}$ is
infinite;

\item $\mathfrak{Q}\in \mathfrak{\mathcal{U}}$ if and only if $\mathfrak{L}%
\backslash \mathfrak{Q}\notin \mathfrak{\mathcal{U}}$.
\end{enumerate}
\end{proposition}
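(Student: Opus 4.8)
The plan is to verify the four ultrafilter properties directly from Definition~\ref{qua} and the dichotomy established just before it, namely that for every $\mathfrak{Q}\subset\mathfrak{L}$ and every net $\varphi$, either $\Lambda\text{-}\lim_{\lambda\in\mathfrak{Q}}\varphi(\lambda)=\lim_{\lambda\uparrow\Lambda}\varphi(\lambda)$ or $\Lambda\text{-}\lim_{\lambda\in\mathfrak{Q}}\varphi(\lambda)=\emptyset$. The first property (upward closure) is the easiest: if $\mathfrak{Q}\subset\mathfrak{R}$ and $\mathfrak{Q}$ is qualified, then for any net $\varphi$ we know $\Lambda\text{-}\lim_{\lambda\in\mathfrak{R}}\varphi(\lambda)$ equals either $\lim_{\lambda\uparrow\Lambda}\varphi(\lambda)$ or $\emptyset$; I would rule out the $\emptyset$ case by observing that, since the modified net $\widetilde{\varphi}$ associated to $\mathfrak{R}$ agrees with the one associated to $\mathfrak{Q}$ on the qualified set $\mathfrak{Q}$, its $\Lambda$-limit must agree with that of the $\mathfrak{Q}$-restricted net, which is $\lim_{\lambda\uparrow\Lambda}\varphi(\lambda)$. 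Hence $\mathfrak{R}$ is qualified.

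For property~4 (the defining property of an ultrafilter) I would argue by the dichotomy: given any $\mathfrak{Q}$, I want to show exactly one of $\mathfrak{Q}$, $\mathfrak{L}\setminus\mathfrak{Q}$ is qualified. Apply the dichotomy to a cleverly chosen test net — for instance the indicator-type net $\chi_{\mathfrak{Q}}(\lambda)$ equal to $1$ for $\lambda\in\mathfrak{Q}$ and $0$ otherwise. Its $\Lambda$-limit is some $\ell\in\mathbb{R}^\ast$; splitting by whether $\ell=1$ or $\ell=0$ (these are the only options, since $\Lambda\text{-}\lim_{\lambda\in\mathfrak{Q}}1$ is $1$ or $\emptyset$ and similarly on the complement, and the sum/product axiom forces $\ell\in\{0,1\}$), one of the two restricted limits recovers the full limit of every net while the other collapses to $\emptyset$. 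The non-principal part requires showing no $\mathfrak{Q}$ equal to a single point $\{\lambda_0\}$ — or more generally any finite set — is qualified; this follows because the cofinite sets $Q(\lambda_0)$ are qualified and one can find two finite sets in $\mathfrak{L}$ whose associated restricted limits differ, using that $\mathfrak{L}$ is infinite and directed.

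Property~3 (every qualified set is infinite) follows from properties~1 and~4 together with non-principality: if $\mathfrak{Q}$ were finite, then $\mathfrak{L}\setminus\mathfrak{Q}$ would be cofinite; one shows cofinite sets are qualified (they all contain some $Q(\lambda_0)$ after reindexing, or directly: a net vanishing off a finite set is eventually $\emptyset$ in the directed-set sense and its $\Lambda$-limit is $\emptyset$ by an argument parallel to ($\Lambda$-2)), hence by property~4 the finite set $\mathfrak{Q}$ is not qualified, a contradiction. Property~2 (closure under finite intersection) is the step I expect to be the main obstacle, since it does not follow formally from the dichotomy alone: given $\mathfrak{Q},\mathfrak{R}$ qualified, I would take an arbitrary net $\varphi$, consider the net $\varphi$ restricted first to $\mathfrak{Q}$ and then to $\mathfrak{R}$ inside it, and use the fact that restricting to a qualified set never changes the $\Lambda$-limit — applied twice — to conclude $\Lambda\text{-}\lim_{\lambda\in\mathfrak{Q}\cap\mathfrak{R}}\varphi(\lambda)=\lim_{\lambda\uparrow\Lambda}\varphi(\lambda)$. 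The care needed here is that the two-step restriction really corresponds to the single restriction to $\mathfrak{Q}\cap\mathfrak{R}$ at the level of the modified nets $\widetilde{\varphi}$, and that the qualified-set property is genuinely ``for every net,'' so it applies to the already-modified net $\widetilde{\varphi}_{\mathfrak{Q}}$; this is where one must be scrupulous about the definitions rather than hand-wave.
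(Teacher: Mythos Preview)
The paper states Proposition~\ref{quaqua} without proof, so there is no argument in the paper to compare yours against. Your sketch has the right ideas throughout; two places deserve tightening. For property~4, the indicator $\chi_{\mathfrak{Q}}$ is the correct test object, and the product axiom does force $\ell=\lim_{\lambda\uparrow\Lambda}\chi_{\mathfrak{Q}}\in\{0,1\}$; but the clean way to pass from $\ell=1$ to ``$\mathfrak{Q}$ is qualified'' is to note that if two \emph{real} nets $\varphi,\psi$ agree on $\mathfrak{Q}$ then $\varphi-\psi=(\varphi-\psi)\,\chi_{\mathfrak{L}\setminus\mathfrak{Q}}$, whence by ($\Lambda$-3) $\lim(\varphi-\psi)=\lim(\varphi-\psi)\cdot(1-\ell)=0$. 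This gives the equivalent ``agreement on $\mathfrak{Q}$ forces equal $\Lambda$-limits'' formulation. Beware that the modified net $\widetilde{\varphi}$ of Definition~\ref{qua} takes the value $\emptyset$ (not $0$) off $\mathfrak{Q}$, so it is not a real net and the sum/product axioms do not literally apply to it; your parenthetical appeal to the dichotomy via $\Lambda\text{-}\lim_{\lambda\in\mathfrak{Q}}1$ glosses over this. For the non-principal part (and hence property~3), make the cofinite argument explicit rather than saying ``after reindexing'': given finite $F\subset\mathfrak{L}$, choose $\lambda_0\in\mathfrak{L}$ strictly containing every element of $F$ (possible since each element of $F$ is a finite subset of $\mathbb{R}^N$); then $Q(\lambda_0)\subset\mathfrak{L}\setminus F$, so property~1 makes $\mathfrak{L}\setminus F$ qualified and property~4 excludes $F$ from $\mathcal{U}$. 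Your treatment of properties~1 and~2 is correct, and you are right that the only delicate point in property~2 is checking that the two-step restriction of $\widetilde{\varphi}$ literally coincides with the single restriction to $\mathfrak{Q}\cap\mathfrak{R}$.
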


The notion of qualified set allows to state Theorem \ref{billo}, which is a
sort of "weak form" of the Transfer (or Leibnitz) Principle (see e.g. \cite%
{keisler76}):

\begin{theorem}
\label{billo}\textbf{(Transfer Principle - weak form)} Let $\mathcal{R}$ be
a relation and let $\varphi $, $\psi $ be two nets. Then the following
statements are equivalent:

\begin{itemize}
\item there exists a qualified set $\mathfrak{Q}$ such that 
\begin{equation*}
\forall \lambda \in \mathfrak{Q},\ \varphi (\lambda )\mathcal{R}\psi
(\lambda );
\end{equation*}

\item we have 
\begin{equation*}
\left( \underset{\lambda \uparrow \Lambda }{\lim }\varphi (\lambda )\right) 
\mathcal{R}^{\ast }\left( \underset{\lambda \uparrow \Lambda }{\lim }\psi
(\lambda )\right) .
\end{equation*}
\end{itemize}
\end{theorem}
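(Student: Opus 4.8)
The plan is to prove the equivalence by exploiting the characterization of qualified sets together with the defining axioms $(\Lambda\text{-}1)$, $(\Lambda\text{-}2)$, $(\Lambda\text{-}3)$ of the $\Lambda$-limit, reducing the relation $\mathcal{R}$ to an equality via its characteristic function. The key observation is that for any relation $\mathcal{R}$ on $E$ (or between appropriate sets) we may introduce the net $\chi_{\mathcal{R}}(\lambda)$ taking value $1$ when $\varphi(\lambda)\,\mathcal{R}\,\psi(\lambda)$ holds and $0$ otherwise; then ``$\varphi(\lambda)\,\mathcal{R}\,\psi(\lambda)$ for all $\lambda$ in some qualified set'' should translate, via the $\{0,1\}$-valued nature of the ultrafilter $\mathcal{U}$ from Proposition \ref{quaqua}, into a statement about the $\Lambda$-limit of $\chi_{\mathcal{R}}$.

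For the forward implication, I would argue as follows. Suppose $\mathfrak{Q}$ is qualified and $\varphi(\lambda)\,\mathcal{R}\,\psi(\lambda)$ for all $\lambda\in\mathfrak{Q}$. By definition, the starred relation $\mathcal{R}^{\ast}$ is itself an internal object: it is the $\Lambda$-limit of the constant net $\lambda\mapsto\mathcal{R}$, so $\mathcal{R}^{\ast}=\lim_{\lambda\uparrow\Lambda}\mathcal{R}$. A pair $(\xi,\zeta)=(\lim_{\lambda\uparrow\Lambda}\varphi(\lambda),\lim_{\lambda\uparrow\Lambda}\psi(\lambda))$ lies in $\mathcal{R}^{\ast}$ precisely when it can be written as $\lim_{\lambda\uparrow\Lambda}(\varphi'(\lambda),\psi'(\lambda))$ with $(\varphi'(\lambda),\psi'(\lambda))\in\mathcal{R}$ for all $\lambda$. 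Here the subtlety is that two nets with the same $\Lambda$-limit need not agree pointwise; but by Definition \ref{qua} they must agree on a qualified set, so one can modify $\varphi,\psi$ off $\mathfrak{Q}$ to land in $\mathcal{R}$ on all of $\mathfrak{L}$ without changing the $\Lambda$-limits. This gives $(\xi,\zeta)\in\mathcal{R}^{\ast}$.

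For the converse, assume $\xi\,\mathcal{R}^{\ast}\,\zeta$ where $\xi=\lim_{\lambda\uparrow\Lambda}\varphi(\lambda)$ and $\zeta=\lim_{\lambda\uparrow\Lambda}\psi(\lambda)$. Then by the description of internal sets as $\Lambda$-limits, there are nets $\varphi',\psi'$ with $(\varphi'(\lambda),\psi'(\lambda))\in\mathcal{R}$ for all $\lambda$ and $\lim_{\lambda\uparrow\Lambda}\varphi'=\xi$, $\lim_{\lambda\uparrow\Lambda}\psi'=\zeta$. Since $\lim\varphi'=\lim\varphi$, the set $\mathfrak{Q}_1=\{\lambda:\varphi'(\lambda)=\varphi(\lambda)\}$ is qualified (this is essentially the content of Definition \ref{qua} applied to the difference, or can be extracted from the fact that $\mathcal{U}$ is an ultrafilter so one of $\{\lambda:\varphi'(\lambda)=\varphi(\lambda)\}$, $\{\lambda:\varphi'(\lambda)\neq\varphi(\lambda)\}$ is qualified, and the latter would force $\lim\varphi'\neq\lim\varphi$ after separating the values); likewise $\mathfrak{Q}_2=\{\lambda:\psi'(\lambda)=\psi(\lambda)\}\in\mathcal{U}$. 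By property (2) of Proposition \ref{quaqua}, $\mathfrak{Q}=\mathfrak{Q}_1\cap\mathfrak{Q}_2$ is qualified, and on $\mathfrak{Q}$ we have $\varphi(\lambda)\,\mathcal{R}\,\psi(\lambda)$, as desired.

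I expect the main obstacle to be the careful handling of the step ``$\lim\varphi'=\lim\varphi$ implies $\{\lambda:\varphi'(\lambda)=\varphi(\lambda)\}$ is qualified.'' This is the crux: it requires knowing that the $\Lambda$-limit operation is injective in the appropriate sense on the level of qualified agreement, which follows from the interplay of $(\Lambda\text{-}2)$, $(\Lambda\text{-}3)$ and the ultrafilter structure — one considers the net $\lambda\mapsto(\varphi'(\lambda)-\varphi(\lambda))$ or rather a suitable $\{0,1\}$-valued test net and uses that a qualified set cannot have qualified complement. A clean way to organize this is to first establish, as a preliminary remark, that for any net $\theta:\mathfrak{L}\to\mathbb{R}$ the set $\{\lambda:\theta(\lambda)=0\}$ is qualified if and only if $\lim_{\lambda\uparrow\Lambda}\theta(\lambda)=0$; once this is in hand, both implications of the theorem follow by the bookkeeping sketched above. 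Everything else — modifying nets off a qualified set, intersecting qualified sets — is routine given Proposition \ref{quaqua} and Definition \ref{qua}.
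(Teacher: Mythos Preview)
Your proposal is correct and is a fully worked-out version of what the paper compresses into a single sentence: the paper's entire proof reads ``It is an immediate consequence of the definition of qualified set.'' The machinery you invoke --- modifying nets off a qualified set without changing the $\Lambda$-limit (Definition~\ref{qua}), the fact that two nets with the same $\Lambda$-limit agree on a qualified set (which you correctly flag as the crux and derive from the ultrafilter dichotomy of Proposition~\ref{quaqua}.4 together with $(\Lambda\text{-}3)$), and closure of $\mathcal{U}$ under finite intersections --- is exactly what the paper is tacitly relying on when it declares the result immediate.
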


\begin{proof} It is an immediate consequence of the definition of
qualified set.\end{proof}

\section{Ultrafunctions}

In this section, we will recall the notion of ultrafunction and we will
analyze its first properties.

\subsection{Definition of Ultrafunctions}

Let $\Omega $ be a set in $\mathbb{R}^{N}$ and let $V(\Omega )\ $be a
function vector space such that $\mathcal{D}(\Omega )\subseteq V(\Omega
)\subseteq \mathcal{C}^{0}(\overline{\Omega })\cap L^{1}(\Omega )\cap
L^{2}(\Omega ).$ We denote by 
\begin{equation*}
\left\{ e_{a}\right\} _{a\in \Omega }
\end{equation*}%
a Hamel basis of $V(\Omega ).$ We recall that a Hamel basis of $V(\Omega )$
has the continuoum cardinality and hence we can use the points of $\Omega $
as indices for this basis. For any $\lambda \in \mathfrak{L},$ we set%
\begin{equation*}
V_{\lambda }(\Omega )=Span\left\{ e_{a}\ |\ a\in \lambda \right\} .
\end{equation*}

\begin{definition}
Given the function space $V(\Omega )$ we set%
\begin{equation*}
V_{\Lambda }(\Omega ):=\lim_{\lambda \uparrow \Lambda }V_{\lambda }(\Omega ).
\end{equation*}%
$V_{\Lambda }(\Omega )$ will be called the \textbf{space of ultrafunctions}
generated by $V(\Omega ).$
\end{definition}

\begin{remark}
Sometimes, for grafic reasons, we will write $\left[ V(\Omega )\right]
_{\Lambda }$ instead of $V_{\Lambda }(\Omega );$ for example, if $V(\Omega )=%
\mathcal{C}^{1}(\Omega )\cap \mathcal{C}^{0}(\overline{\Omega })\cap
L^{1}(\Omega )\cap L^{2}(\Omega ),$ it makes sense to write%
\begin{equation*}
V_{\Lambda }(\Omega )=\left[ \mathcal{C}^{1}(\Omega )\cap \mathcal{C}^{0}(%
\overline{\Omega })\cap L^{1}(\Omega )\cap L^{2}(\Omega )\right] _{\Lambda }.
\end{equation*}%
In the applications, $V_{\Lambda }^{k}(\Omega )$ (or simply $V^{k}(\Omega )$%
) will denote the space of ultrafunctions generated by $\mathcal{C}%
^{k}(\Omega )\cap \mathcal{C}^{0}(\overline{\Omega })\cap L^{1}(\Omega )\cap
L^{2}(\Omega ),$ namely%
\begin{equation*}
V_{\Lambda }^{k}(\Omega )=\left[ \mathcal{C}^{k}(\Omega )\cap \mathcal{C}%
^{0}(\overline{\Omega })\cap L^{1}(\Omega )\cap L^{2}(\Omega )\right]
_{\Lambda }.
\end{equation*}
\end{remark}

So, given any vector space of functions $V(\Omega )$, we have the following
three properties:

\begin{enumerate}
\item the ultrafunctions in $V_{\Lambda }(\Omega )$ are $\Lambda $-limits of
functions in $V_{\lambda };$

\item the space of ultrafunctions $V_{\Lambda }(\Omega )$ is a vector space
of hyperfinite dimension, since it is a $\Lambda $-limit of a net of finite
dimensional vector spaces;

\item $V_{\Lambda }(\Omega )$ includes $V(\Omega ).$
\end{enumerate}

Hence the ultrafunctions are particular internal functions 
\begin{equation*}
u:\Omega {^{\ast }}\rightarrow {\mathbb{R}^{\ast }.}
\end{equation*}

By definition, the dimension of $V_{\Lambda }(\Omega )$ (that we denote by $%
\mathfrak{n})$ is equal to the internal cardinality of any of its bases, and
the following formula holds: 
\begin{equation}
\mathfrak{n}=\lim_{\lambda \uparrow \Lambda }\text{dim}(V_{\lambda }(\Omega
))=\lim_{\lambda \uparrow \Lambda }card\left( \lambda \right) =card^{\ast
}\left( \Lambda \right) .  \label{anna}
\end{equation}

\begin{remark}
\label{nina}Notice that the natural extension $f^{\ast }$ of a function $f$
is an ultrafunction if and only if $f\in V(\Omega ).$
\end{remark}

\begin{proof} Let $f\in V(\Omega ).$ Then, eventually, $f\in V_{\lambda }$
and hence 
\begin{equation*}
f^{\ast }=\lim_{\lambda \uparrow \Lambda }f\in \lim_{\lambda \uparrow
\Lambda }\ V_{\lambda }(\Omega )=V_{\Lambda }(\Omega ).
\end{equation*}

Conversely, if $f\notin V(\Omega )$ then by the Transfer Principle (Th. \ref%
{billo}) it follows that $f^{\ast }\notin V^{\ast }(\Omega )$ and, since $%
V_{\Lambda }(\Omega )\subset V^{\ast }(\Omega )$, this entails the thesis.\end{proof}

\bigskip

Since $V_{\Lambda }(\Omega )\subset \left[ L^{2}(\mathbb{R})\right] ^{\ast
}, $ we can equip $V_{\Lambda }(\Omega )$ with the following scalar product:%
\begin{equation*}
\left( u,v\right) =\int^{\ast }u(x)v(x)\ dx,
\end{equation*}%
where $\int^{\ast }$ is the natural extension of the Lebesgue integral
considered as a functional%
\begin{equation*}
\int :L^{1}(\Omega )\rightarrow {\mathbb{R}}.
\end{equation*}%
The norm of an ultrafunction will be given by 
\begin{equation*}
\left\Vert u\right\Vert =\left( \int^{\ast }|u(x)|^{2}\ dx\right) ^{\frac{1}{%
2}}.
\end{equation*}

\subsection{Delta and Sigma Basis}

In this section we introduce two particular bases for $V_{\Lambda }\left(
\Omega \right) $ and we study their main properties. We start by defining
the \textit{Delta ultrafunctions}:

\begin{definition}
\label{dede}Given a point $q\in \overline{\Omega }^{\ast },$ we denote by $%
\delta _{q}(x)$ an ultrafunction in $V_{\Lambda }\left( \Omega \right) $
such that 
\begin{equation}
\forall v\in V_{\Lambda }(\Omega ),\ \int^{\ast }v(x)\delta _{q}(x)dx=v(q).
\label{deltafunction}
\end{equation}%
$\delta _{q}(x)$ is called Delta (or the Dirac) ultrafunction centered in $q$%
.
\end{definition}

Let us see the main properties of the Delta ultrafunctions:

\begin{theorem}
\label{delta} We have the following properties:

\begin{enumerate}
\item For every $q\in \Omega ^{\ast }$ there exists an unique Delta
ultrafunction centered in $q;$

\item for every $a,\ b\in \Omega ^{\ast }\ \delta _{a}(b)=\delta _{b}(a);$

\item $\left\Vert \delta _{q}\right\Vert ^{2}=\delta _{q}(q);$

\item if $\left\{ e_{j}\right\} _{j\in J}$ is an orthonomal basis of $%
V_{\Lambda }(\Omega )$ then, for every $q\in \overline{\Omega }^{\ast }$ 
\begin{equation*}
\delta _{q}(x)=\sum_{j\in J}e_{j}(q)e_{j}(x).
\end{equation*}
\end{enumerate}
\end{theorem}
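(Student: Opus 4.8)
The key observation is that $V_{\Lambda}(\Omega)$ is a hyperfinite-dimensional inner product space over $\mathbb{R}^{\ast}$, so it carries all the first-order linear-algebra facts about finite-dimensional inner product spaces via the Transfer Principle (Th.~\ref{billo}); in particular it has orthonormal bases, and the Riesz representation of linear functionals holds. The plan is to use this to get (1) and (4) together, then read off (2) and (3).

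For (1), fix $q\in\Omega^{\ast}$ and consider the evaluation map $v\mapsto v(q)$. I would first argue it is a well-defined $\mathbb{R}^{\ast}$-linear functional on $V_{\Lambda}(\Omega)$: writing $q=\lim_{\lambda\uparrow\Lambda}q_{\lambda}$ with $q_{\lambda}\in\Omega$, the evaluation functionals $v\mapsto v(q_{\lambda})$ on the finite-dimensional spaces $V_{\lambda}(\Omega)$ assemble, via the $\Lambda$-limit for functions, into the internal functional $v\mapsto v(q)$ on $V_{\Lambda}(\Omega)$. By Transfer applied to the finite-dimensional statement ``every linear functional on an inner product space is represented by a unique vector'', there is a unique $\delta_{q}\in V_{\Lambda}(\Omega)$ with $\int^{\ast}v(x)\delta_{q}(x)\,dx=v(q)$ for all $v\in V_{\Lambda}(\Omega)$; uniqueness is immediate since $V_{\Lambda}(\Omega)$ is nondegenerate for $(\cdot,\cdot)$. (Nondegeneracy itself follows by Transfer from the finite-dimensional case, or directly because the $\Lambda$-limit of orthonormal bases of the $V_{\lambda}(\Omega)$ is an orthonormal basis of $V_{\Lambda}(\Omega)$.)

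For (4), let $\{e_{j}\}_{j\in J}$ be an orthonormal basis of $V_{\Lambda}(\Omega)$ (here $J$ is a hyperfinite index set of internal cardinality $\mathfrak{n}$). In a finite-dimensional inner product space the Riesz vector of the functional $v\mapsto v(q)$ is $\sum_{j}\overline{e_{j}(q)}\,e_{j}$; transferring this identity (and using that we are over an ordered, hence ``real'', field so conjugation is trivial) gives $\delta_{q}(x)=\sum_{j\in J}e_{j}(q)e_{j}(x)$, where the sum is a hyperfinite sum in the sense of Section~\ref{HE}. By the uniqueness in (1) this expression is independent of the chosen orthonormal basis. Then (2) and (3) are corollaries: from the formula in (4), $\delta_{a}(b)=\sum_{j}e_{j}(a)e_{j}(b)=\delta_{b}(a)$, which is symmetric in $a,b$; and $\|\delta_{q}\|^{2}=\int^{\ast}\delta_{q}(x)\delta_{q}(x)\,dx=\delta_{q}(q)$ by plugging $v=\delta_{q}$ into (\ref{deltafunction}) (alternatively $\sum_{j}e_{j}(q)^{2}=\delta_{q}(q)$ from (4)).

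The main obstacle is making the Transfer step fully rigorous within the weak Transfer Principle available here (Th.~\ref{billo}): I must phrase the finite-dimensional facts — existence of orthonormal bases, nondegeneracy of the inner product, Riesz representation of the evaluation functional, and the Fourier-coefficient formula — as relations holding, for each $\lambda$, on the finite-dimensional pair $(V_{\lambda}(\Omega),\text{eval at }q_{\lambda})$, so that their $\Lambda$-limits give the internal statements about $(V_{\Lambda}(\Omega),\text{eval at }q)$. Once this bookkeeping is set up correctly, everything else is routine; one should also check the harmless point that the basis index set $J$ may be taken hyperfinite and the sums interpreted as hyperfinite sums, and that $q\in\overline{\Omega}^{\ast}$ rather than $\Omega^{\ast}$ is permissible in (2)--(4) since evaluation still makes sense there.
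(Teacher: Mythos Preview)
The paper does not actually prove this theorem here: its entire proof is the citation ``See \cite{belu2013}.'' So there is no in-paper argument to compare against. Your proposal is correct and is the standard route (Riesz representation on a hyperfinite-dimensional inner product space, then the orthonormal-basis expansion formula, from which the symmetry (2) and the norm identity (3) drop out); this is almost certainly what the cited reference does as well. Your derivation of (3) directly from the defining property~(\ref{deltafunction}) by taking $v=\delta_q$ is in fact cleaner than going through (4), since it does not require first fixing an orthonormal basis.
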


\begin{proof} See \cite{belu2013}.\end{proof}

\bigskip

Now we will recall some basic facts of linear algebra which will be used
later. Given a basis $\left\{ e_{j}\right\} $ in a finite dimensional vector
space $V,$ the dual basis of $\left\{ e_{j}\right\} $ is the basis $\left\{
e_{j}^{\prime }\right\} $ of the dual space $V^{\prime }$ defined by the
following relation:%
\begin{equation*}
e_{j}^{\prime }\left[ e_{k}\right] =\delta _{jk}.
\end{equation*}%
If $V$ has a scalar product$\ \left( \cdot \ |\ \cdot \right) $ then $V$ and 
$V^{\prime }$ can be identified and hence the dual basis $\left\{
e_{j}^{\prime }\right\} $ is characterized by the following relation:%
\begin{equation*}
\left( e_{j}^{\prime }\ |\ e_{k}\right) =\delta _{jk}.
\end{equation*}

The notion of dual basis allows to give the following definition:

\begin{definition}
A Delta-basis $\left\{ \delta _{a}(x)\right\} _{a\in \Sigma }$ $(\Sigma
\subset \Omega ^{\ast })$ is a basis for $V_{\Lambda }(\Omega )$ whose
elements are Delta ultrafunctions. Its dual basis $\left\{ \sigma
_{a}(x)\right\} _{a\in \Sigma }$ is called Sigma-basis. The set $\Sigma
\subset \Omega ^{\ast }$ is called set of independent points.
\end{definition}

So a Sigma-basis is characterized by the fact that, $\forall a,b\in \Sigma ,$%
\begin{equation}
\int^{\ast }\delta _{a}(x)\sigma _{b}(x)dx=\delta _{ab}.  \label{mimma}
\end{equation}

It is not difficult to prove the existence of a Delta-basis (\cite{belu2013}%
). We will list some properties of Delta- and Sigma-bases (for the proof see 
\cite{belu2013}):

\begin{theorem}
\label{tbase}A Delta-basis $\left\{ \delta _{q}(x)\right\} _{q\in \Sigma }$
and its dual basis $\left\{ \sigma _{q}(x)\right\} _{q\in \Sigma }$ satisfy
the following properties:

\begin{enumerate}
\item if $u\in V_{\Lambda }(\Omega )$, then%
\begin{equation*}
u(x)=\sum_{q\in \Sigma }\left( \int^{\ast }\sigma _{q}(\xi )u(\xi )d\xi
\right) \delta _{q}(x);
\end{equation*}

\item if $u\in V_{\Lambda }(\Omega )$, then%
\begin{equation}
u(x)=\sum_{q\in \Sigma }u(q)\sigma _{q}(x);  \label{brava+}
\end{equation}

\item if two ultrafunctions $u$ and $v$ coincide on a set of independent
points then they are equal;

\item if $\Sigma $ is a set of independent points and $a,b\in \Sigma $ then $%
\sigma _{a}(b)=\delta _{ab};$

\item for any $q\in \Omega ^{\ast }$ $\sigma _{q}(x)$ is well defined.
\end{enumerate}
\end{theorem}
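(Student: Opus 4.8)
The plan is to obtain items~1--4 from two ingredients only: the fact that both the Delta-basis $\left\{ \delta _{q}\right\} _{q\in \Sigma }$ and its dual Sigma-basis $\left\{ \sigma _{q}\right\} _{q\in \Sigma }$ are bases of the hyperfinite-dimensional space $V_{\Lambda }(\Omega )$, so that every ultrafunction has \emph{unique} coordinates with respect to each of them (that the dual basis is again a basis follows, through Theorem~\ref{billo}, by transferring the corresponding fact about finite-dimensional spaces); and the two ``orthogonality'' identities already available, namely the defining property $\int ^{\ast }v(x)\delta _{q}(x)\,dx=v(q)$ of a Delta ultrafunction, equation~(\ref{deltafunction}), and the biorthogonality $\int ^{\ast }\delta _{a}(x)\sigma _{b}(x)\,dx=\delta _{ab}$ of a Sigma-basis, equation~(\ref{mimma}). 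Item~5 is of a different nature, since there $q$ ranges over all of $\Omega ^{\ast }$ and not merely over $\Sigma $, and it is where some care will be needed.

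For item~1 I would write the unique expansion $u=\sum _{q\in \Sigma }c_{q}\delta _{q}$ and, for fixed $p\in \Sigma $, take the scalar product with $\sigma _{p}$ (that is, integrate $u\sigma _{p}$): by~(\ref{mimma}) only the term $q=p$ survives, so $c_{p}=\int ^{\ast }\sigma _{p}(\xi )u(\xi )\,d\xi $, which is the asserted formula. For item~2 I would instead use the unique expansion $u=\sum _{q\in \Sigma }d_{q}\sigma _{q}$ in the Sigma-basis and take the scalar product with $\delta _{p}$: by the defining property of $\delta _{p}$ this integral equals $u(p)$, while by~(\ref{mimma}) it equals $\sum _{q}d_{q}\int ^{\ast }\delta _{p}\sigma _{q}=d_{p}$; hence $d_{p}=u(p)$.

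Items~3 and~4 are then immediate. For item~3, if $u$ and $v$ agree on a set $\Sigma $ of independent points, their expansions $\sum _{q\in \Sigma }u(q)\sigma _{q}$ and $\sum _{q\in \Sigma }v(q)\sigma _{q}$ given by item~2 coincide term by term, so $u=v$. For item~4, apply the defining property of the Delta ultrafunction $\delta _{b}$ to the test ultrafunction $\sigma _{a}\in V_{\Lambda }(\Omega )$: this yields $\sigma _{a}(b)=\int ^{\ast }\delta _{b}(x)\sigma _{a}(x)\,dx$, which by~(\ref{mimma}) equals $\delta _{ab}$.

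The main obstacle is item~5, because $\sigma _{q}$ has so far been attached only to points $q$ of a given set $\Sigma $ of independent points, and one must provide a canonical extension to every $q\in \Omega ^{\ast }$. My plan is as follows. For any $q\in \Omega ^{\ast }$ the Delta ultrafunction $\delta _{q}$ belongs to $V_{\Lambda }(\Omega )$ by Theorem~\ref{delta}, so I expand it in the Delta-basis by means of item~1: its coefficient on $\delta _{a}$ is $\int ^{\ast }\sigma _{a}(\xi )\delta _{q}(\xi )\,d\xi =\sigma _{a}(q)$, whence $\delta _{q}=\sum _{a\in \Sigma }\sigma _{a}(q)\,\delta _{a}$; that is, $\delta _{q}$ has coordinate vector $\left( \sigma _{a}(q)\right) _{a\in \Sigma }$ in the Delta-basis. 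I then \emph{define} $\sigma _{q}:=\sum _{a\in \Sigma }\sigma _{a}(q)\,\sigma _{a}$, the ultrafunction with these same coordinates in the Sigma-basis (equivalently, one may realise $\sigma _{q}$ as the $\Lambda$-limit of the finite-dimensional dual functions $\sigma _{q}^{\lambda }$, extended to points outside the finite sets $\Sigma _{\lambda }$ by the same recipe; or characterise $\sigma _{q}$ as the unique ultrafunction with $\int ^{\ast }\delta _{b}(x)\sigma _{q}(x)\,dx=\sigma _{b}(q)$ for every $b\in \Sigma $). Two verifications then remain: that the right-hand side is a legitimate element of $V_{\Lambda }(\Omega )$ (a hyperfinite linear combination of basis elements — and, in the $\Lambda$-limit description, independent of the representing net — which is routine); and, more to the point, that this is genuinely an \emph{extension}, i.e. that it returns the old $\sigma _{q}$ when $q\in \Sigma $. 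The latter is the real content of item~5 and is settled by item~4: for $q\in \Sigma $ one has $\sigma _{a}(q)=\delta _{aq}$, so $\sum _{a\in \Sigma }\sigma _{a}(q)\,\sigma _{a}=\sigma _{q}$.
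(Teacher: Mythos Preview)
The paper does not actually prove this theorem: its entire proof reads ``See \cite{belu2013}.'' So there is no in-paper argument to compare against, and your proposal stands or falls on its own.

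Your proofs of items~1--4 are correct and are exactly the natural linear-algebra arguments: unique expansion in a basis, together with the two identities~(\ref{deltafunction}) and~(\ref{mimma}), determines all the coefficients as claimed. Nothing is missing there.

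For item~5 your reading of ``well defined'' --- namely, that the assignment $q\mapsto\sigma_{q}$, initially given only for $q\in\Sigma$, extends consistently to every $q\in\Omega^{\ast}$ --- is reasonable, and your construction $\sigma_{q}:=\sum_{a\in\Sigma}\sigma_{a}(q)\,\sigma_{a}$ does the job, with item~4 guaranteeing compatibility on $\Sigma$. One caveat worth flagging: as you have written it, this extended $\sigma_{q}$ is tied to the particular Delta-basis $\Sigma$ you started from (the change-of-basis map $\delta_{a}\mapsto\sigma_{a}$ depends on the whole basis, not just on the point), so ``well defined'' here cannot mean ``independent of the choice of $\Sigma$'' without further argument. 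Since the paper offers no proof and the phrase is ambiguous, your interpretation is defensible; but if the intended meaning in \cite{belu2013} is basis-independence, that would require a separate verification you have not supplied.
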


\subsection{Extensions of functions, functionals and operators\label{effo}}

A measurable function $f$ can be identified with an element of the dual
space of $V(\Omega )$ provided that, $\forall v\in V(\Omega ),\ fv\ $is
integrable. In this case, we will write $f\in V^{\prime }(\Omega ).$ Every
function $f\in V^{\prime }(\Omega )$ can be extended to an ultrafunction $%
\widetilde{f}\in V_{\Lambda }(\Omega )$ just setting 
\begin{equation}
\widetilde{f}\left( x\right) =\sum_{a\in \Sigma }\left( \int^{\ast }f^{\ast
}\delta _{a}dx\right) \sigma _{a}(x).  \label{gianna}
\end{equation}%
The integral $\int^{\ast }f^{\ast }\delta _{a}dx$ makes sense since $\delta
_{a}\in V_{\Lambda }(\Omega )\subset V(\Omega )^{\ast }$ and $f^{\ast }\in
V^{\prime }(\Omega )^{\ast }.$ Notice that in general $\widetilde{f}\left(
x\right) \neq f^{\ast }(x);$ actually, the equality holds if and only if $%
f\in V(\Omega ).$

\bigskip

\textbf{Example: }If $\Omega $ is bounded then every function in $%
L^{1}(\Omega )$ can be extended to an ultrafunction $\widetilde{f}\in
V_{\Lambda }(\Omega ),$ since $L^{1}(\Omega )\subset \mathcal{C}^{\prime
}(\Omega )\subset V^{\prime }(\Omega ).$

\begin{remark}
If $f\in L^{2}(\Omega )$ then $\widetilde{f}$ is nothing else but the
orthogonal projection of $f^{\ast }$ on $V_{\Lambda }(\Omega ).$ More in
general, $\widetilde{f}\left( x\right) $ is the only function in $V_{\Lambda
}(\Omega )$ such that%
\begin{equation*}
\forall v\in V(\Omega ),\ \int^{\ast }\widetilde{f}(x)v(x)dx=\int^{\ast
}f(x)v(x)dx.
\end{equation*}
\end{remark}

A discussion on the previous remark can be found in \cite{belu2013}.

\begin{remark}
The formula (\ref{gianna}) is not the only way to identify an ultrafunction
with a standard function: an other possible way is the following:%
\begin{equation}
f\mapsto \sum_{a\in \Sigma }f(a)\sigma _{a}(x)  \label{giannina}
\end{equation}%
Notice that (\ref{gianna}) and (\ref{giannina}) are equal if and only if $%
f\in V(\Omega ).$ The identification (\ref{giannina}) is studied in detail
in \cite{algebra}, where it is used to construct an algebra of
ultrafunctions with good properties of coherence w.r.t. distributions.
However, for the applications presented here, the identification (\ref%
{gianna}) seems better.
\end{remark}

If%
\begin{equation*}
J:V(\Omega )\rightarrow \mathbb{R}
\end{equation*}%
is a functional then the restriction of $J^{\ast }$ to $V_{\Lambda }(\Omega
) $ is well defined and we will denote it by%
\begin{equation*}
\widetilde{J}:V_{\Lambda }(\Omega )\rightarrow \mathbb{R}^{\ast }.
\end{equation*}%
Now let 
\begin{equation*}
A:V(\Omega )\rightarrow W(\Omega )
\end{equation*}%
be an operator between function spaces. If $W(\Omega )\subset V^{\prime
}(\Omega )^{\ast },$ it is possible to extend this operator to an operator
between ultrafunctions%
\begin{equation*}
\widetilde{A}:V_{\Lambda }(\Omega )\rightarrow V_{\Lambda }(\Omega ),
\end{equation*}%
defining $\widetilde{A}(u)$ as the only ultrafunction such that, $\forall
\varphi \in V_{\Lambda }(\Omega ),$%
\begin{equation*}
\int^{\ast }\widetilde{A}(u)\varphi \ dx=\int^{\ast }A^{\ast }(u)\varphi \
dx.
\end{equation*}%
Notice that, by definition, if $u\in V(\Omega )^{\sigma }$ then $\widetilde{A%
}(u)=\widetilde{A(u)}.$

For example, if $V(\Omega )\subset \mathcal{C}^{1}(\left[ a,b\right] )$ and $%
\partial $ is the usual derivative, we can define the "generalized"
derivative $D:V_{\Lambda }(\Omega )\rightarrow V_{\Lambda }(\Omega )$ as
follows: $\forall \varphi \in V_{\Lambda }(\Omega )$ we pose%
\begin{equation*}
\int^{\ast }Du\varphi \ dx=\int^{\ast }\left( \partial u\right) ^{\ast
}\varphi dx=\int^{\ast }\left( u^{\prime }\right) ^{\ast }\varphi dx.
\end{equation*}%
It is easy to check that $D$ is the only operator on $\mathcal{C}^{1}(\left[
a,b\right] )$ such that, for every $u,v\in \left[ \mathcal{C}^{1}(\left[ a,b%
\right] )\right] _{\Lambda },$ we have%
\begin{equation}
\int^{\ast }Du\varphi \ dx=\left[ u\varphi \right] _{a}^{b}-\int^{\ast
}uD\varphi \ dx.  \label{manola}
\end{equation}%
Let us finally observe that if both $f,\partial f\in V(\Omega )$ then%
\begin{equation}
Df=\left( \partial f\right) ^{\ast }.  \label{zumpappero}
\end{equation}%
Notice that, from now on, $D$ will denote the ultrafunction derivative,
while we will denote by $\partial $ the usual derivative or the weak
derivative in the sense of distributions.

\subsection{Distributions}

For simplicity here we deal only with distributions in $\mathcal{D}^{\prime
}\left( \mathbb{R}\right) .$ It is well known that a distribution $T\in 
\mathcal{D}^{\prime }\left( \mathbb{R}\right) $ has the following
representation\footnote{%
See e.g. Rudin, Functional Analysis, Th. 6.28, pag.169}: 
\begin{equation}
T=\sum_{k=0}^{\infty }\partial ^{k}f_{k},  \label{mary}
\end{equation}%
where $f_{k}\in \mathcal{C}^{1}(\mathbb{R})$ and the sum is locally finite%
\footnote{%
Actually the formula below holds also for $f_{k}\in \mathcal{C}^{0}\left( 
\mathbb{R}\right) ;$ we have taken the $f_{k}\ $in $\mathcal{C}^{1}\left( 
\mathbb{R}\right) $ in order to give sense to Def. \ref{peppa}.}; namely,
for every $\varphi \in \mathcal{D}\left( \left[ a,b\right] \right) ,$ we
have that%
\begin{equation*}
\left\langle T,\varphi \right\rangle =\sum_{k=0}^{N(a,b)}\left( -1\right)
^{k}\int f(x)\partial ^{k}\varphi (x)dx,
\end{equation*}%
where $N(a,b)$ is a natural number which depends on $a$,$b$ and $T$. If $T$
is given by (\ref{mary}), we denote by $F_{T}$ the set%
\begin{equation*}
F_{T}=\{f_{k}\mid k\in \mathbb{N\}}
\end{equation*}%
and, if $N\in \mathbb{N}^{\ast },$ we pose 
\begin{equation*}
\{f_{1},...,f_{N}\}=\{f_{i}\in F_{T}^{\ast }\mid 0\leq k\leq N\}.
\end{equation*}%
Moreover, whenever we have $f\in V_{\Lambda }^{\prime }(\mathbb{R})$ (i.e. $f
$ such that $\int^{\ast }f(x)u(x)dx$ is well-posed for every $u\in
V_{\Lambda }(\mathbb{R})),$ we let $\widetilde{f}$ be the unique
ultrafunction such that, for every ultrafunction $v\in V_{\Lambda }(\mathbb{R%
}),$ we have%
\begin{equation*}
\int^{\ast }\widetilde{f}(x)v(x)dx=\int^{\ast }f(x)v(x)dx.
\end{equation*}%
Equivalently, if $\{\delta _{a}\}_{a\in \Sigma }$ is a delta basis of $%
V_{\Lambda }(\mathbb{R}),$ we have%
\begin{equation}
\widetilde{f}(x)=\sum_{a\in \Sigma }\left[ \int^{\ast }f(\xi )\delta
_{a}(\xi )d\xi \right] \sigma _{a}(x).  \label{bertoldo}
\end{equation}%
We still use the notation with $\widetilde{\cdot }$ because the association
given by (\ref{bertoldo}) is nothing more than the extension to $V_{\Lambda
}^{\prime }(\mathbb{R})$ of the association given by $\left( \ref{gianna}%
\right) $ (in the sense that, if $f\in V^{\prime }(\mathbb{R}),$ then $%
\widetilde{f}=\widetilde{f^{\ast }}$).

Let $\beta \ $be a fixed positive infinite number.

\begin{definition}
\label{peppa}We say that $\widetilde{T}$ is the ultrafunction which extends
the distribution (\ref{mary}) if 
\begin{equation}
\widetilde{T}=\sum_{k=0}^{N^{\ast }(-\beta ,\beta )}D^{k}\widetilde{f_{k}}.
\label{jane}
\end{equation}
\end{definition}

Let us note that the above definition is well posed, since 
\begin{equation*}
\{f_{k}\in F_{T}^{\ast }\mid 0\leq k\leq N^{\ast }(-\beta ,\beta
)\}\subseteq V_{\Lambda }^{\prime }(\mathbb{R}),
\end{equation*}%
and that it is justified by the following proposition:

\begin{proposition}
\label{barbapapa}$\forall \varphi \in \mathcal{D}(\mathbb{R}),$ $\forall
T\in \mathcal{D}^{\prime }(\mathbb{R})$ we have%
\begin{equation}
\int^{\ast }\widetilde{T}(x)\widetilde{\varphi }(x)dx=\left\langle T,\varphi
\right\rangle .  \label{katy}
\end{equation}
\end{proposition}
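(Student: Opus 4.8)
The plan is to expand both sides of (\ref{katy}) along the representation (\ref{mary}) of the distribution $T$ and the defining formula (\ref{jane}) of $\widetilde{T}$, and to reduce the identity to the integration-by-parts rule (\ref{manola}) for the ultrafunction derivative, together with the representation property of the $\widetilde{\cdot}$ operation. First I would fix $\varphi\in\mathcal{D}(\mathbb{R})$ and choose $a<b$ so that $\operatorname{supp}\varphi\subset(a,b)$; since $\beta$ is positive infinite, we have $-\beta<a<b<\beta$, hence $N^{\ast}(-\beta,\beta)\geq N^{\ast}(a,b)$, and by the local-finiteness footnote the distributional pairing is the finite sum $\langle T,\varphi\rangle=\sum_{k=0}^{N(a,b)}(-1)^{k}\int f_{k}(x)\partial^{k}\varphi(x)\,dx$. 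By Transfer this finite real identity becomes the corresponding $\ast$-finite identity, so it suffices to prove, for each single $k$ in range,
\begin{equation*}
\int^{\ast}D^{k}\widetilde{f_{k}}(x)\,\widetilde{\varphi}(x)\,dx=(-1)^{k}\int^{\ast}f_{k}^{\ast}(x)\,(\partial^{k}\varphi)^{\ast}(x)\,dx,
\end{equation*}
and then sum over $k$ using the additivity of $\int^{\ast}$ and the fact that the terms with $k>N^{\ast}(a,b)$ contribute nothing (their derivatives of $\widetilde{f_k}$ integrate against $\widetilde\varphi$ to zero because $\varphi$ and all its derivatives vanish outside $[a,b]$, which one checks via the same reduction).

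The core of the argument is this single-$k$ identity. I would move the $D^{k}$ off $\widetilde{f_{k}}$ by applying (\ref{manola}) repeatedly: since $\varphi\in\mathcal{D}(\mathbb{R})$, all boundary terms $[\,\cdot\,]_{a}^{b}$ vanish (here one uses that $D\widetilde{\varphi}=(\partial\varphi)^{\ast}=\widetilde{\partial\varphi}$ because $\varphi$, hence each $\partial^{j}\varphi$, lies in $V(\mathbb{R})$, so (\ref{zumpappero}) applies and no correction terms appear when differentiating $\widetilde\varphi$). This yields
\begin{equation*}
\int^{\ast}D^{k}\widetilde{f_{k}}(x)\,\widetilde{\varphi}(x)\,dx=(-1)^{k}\int^{\ast}\widetilde{f_{k}}(x)\,D^{k}\widetilde{\varphi}(x)\,dx=(-1)^{k}\int^{\ast}\widetilde{f_{k}}(x)\,(\partial^{k}\varphi)^{\ast}(x)\,dx.
\end{equation*}
Finally, since $(\partial^{k}\varphi)^{\ast}$ is the natural extension of a function in $V(\mathbb{R})$, it is an ultrafunction, so by the defining property of $\widetilde{f_{k}}$ (the Remark: $\widetilde{f_k}$ is the unique element of $V_\Lambda(\mathbb{R})$ with $\int^{\ast}\widetilde{f_k}v=\int^{\ast}f_k v$ for all $v\in V(\mathbb{R})$, extended to test against all of $V_\Lambda(\mathbb{R})$ via (\ref{bertoldo})) we may replace $\widetilde{f_{k}}$ by $f_{k}^{\ast}$ inside the integral, giving exactly the right-hand side.

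The step I expect to be the main obstacle is the careful bookkeeping of the infinite/hyperfinite truncation: one must verify that the $\ast$-finite sum $\sum_{k=0}^{N^{\ast}(-\beta,\beta)}$ in (\ref{jane}), when paired with $\widetilde\varphi$, really collapses to the finite sum $\sum_{k=0}^{N(a,b)}$ that computes $\langle T,\varphi\rangle$ — i.e. that every term with index $k$ in the "infinitely large but irrelevant" range $N^{\ast}(a,b)<k\leq N^{\ast}(-\beta,\beta)$ integrates to zero against $\widetilde\varphi$. This is where one genuinely uses that the representation (\ref{mary}) is locally finite and that $\operatorname{supp}\varphi$ is a fixed compact set; concretely, for such $k$ the function $f_k$ vanishes on a neighborhood of $[a,b]$, so by Transfer $f_k^{\ast}$ vanishes on $[a,b]^{\ast}\supseteq\operatorname{supp}^{\ast}\widetilde\varphi$ — wait, more care is needed since $\widetilde\varphi$ need not have support in $[a,b]$; instead one argues that $\int^{\ast}D^{k}\widetilde{f_k}\,\widetilde\varphi\,dx=(-1)^k\int^{\ast}\widetilde{f_k}(\partial^k\varphi)^{\ast}dx=(-1)^k\int^{\ast}f_k^{\ast}(\partial^k\varphi)^{\ast}dx$, and this last standard-shaped integral equals $(-1)^k\int f_k\partial^k\varphi\,dx$ by Transfer of $\int f g\,dx$, which is $0$ by local finiteness. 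Once this truncation issue is handled, the rest is the routine integration-by-parts and representation manipulations sketched above.
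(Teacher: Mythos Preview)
Your approach is essentially identical to the paper's: fix a compact support interval $[a,b]$, integrate by parts $k$ times using (\ref{manola}) and (\ref{zumpappero}) to move $D^{k}$ onto $\widetilde{\varphi}=\varphi^{\ast}$, invoke the defining property of $\widetilde{f_k}$ to replace it by $f_k^{\ast}$, and then collapse the hyperfinite sum to the finite one via local finiteness --- the paper packages the integration-by-parts step as a separate Lemma (\ref{derivatona}). The one point to tighten is your phrase ``applying (\ref{manola}) repeatedly'': since the index $k$ ranges up to $N^{\ast}(-\beta,\beta)\in\mathbb{N}^{\ast}$ and may be infinite, external iteration is not legitimate and you must argue by \emph{internal} induction on $k\in\mathbb{N}^{\ast}$, exactly as the paper does in proving Lemma~\ref{derivatona}.
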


To prove Proposition \ref{barbapapa} we make use of the following lemma:

\begin{lemma}
\label{derivatona}For every $k\in \mathbb{N}^{\ast },$ for every $u\in
V_{\Lambda }(\mathbb{R}),$ for every $\varphi \in \mathcal{D}(\mathbb{R)}$
we have the following: 
\begin{equation*}
\int^{\ast }D^{k}u(x)\cdot \varphi ^{\ast }(x)dx=(-1)^{k}\int^{\ast
}u(x)\partial ^{k}\varphi ^{\ast }(x)dx.
\end{equation*}
\end{lemma}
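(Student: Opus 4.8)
The statement to prove is Lemma~\ref{derivatona}: for every $k\in\mathbb{N}^{\ast}$, every $u\in V_{\Lambda}(\mathbb{R})$ and every $\varphi\in\mathcal{D}(\mathbb{R})$, one has $\int^{\ast}D^{k}u(x)\,\varphi^{\ast}(x)\,dx=(-1)^{k}\int^{\ast}u(x)\,\partial^{k}\varphi^{\ast}(x)\,dx$. The natural strategy is induction on $k$, using the integration-by-parts formula (\ref{manola}), i.e. $\int^{\ast}Du\,\psi\,dx=[u\psi]_{a}^{b}-\int^{\ast}u\,D\psi\,dx$ for $u,\psi\in V_{\Lambda}(\mathbb{R})$, together with two observations: first, that for test functions the ultrafunction derivative agrees with the pointwise derivative, $D\varphi^{\ast}=(\partial\varphi)^{\ast}$ by (\ref{zumpappero}) (this requires $\varphi^{\ast},\partial\varphi^{\ast}\in V(\mathbb{R})^{\ast}$, which holds since $\varphi\in\mathcal{D}(\mathbb{R})\subseteq V(\mathbb{R})$, hence all its derivatives lie in $V(\mathbb{R})$); and second, that all boundary terms vanish because $\varphi$ (and all its derivatives) have compact support, so by transfer $\varphi^{\ast}$ and its derivatives vanish at any infinite endpoint.

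First I would treat the base case $k=1$: apply (\ref{manola}) with $\psi=\varphi^{\ast}$. Strictly, (\ref{manola}) as stated is on a bounded interval $[a,b]$; here one works on $\mathbb{R}$, so I would either invoke the analogous formula on $\mathbb{R}^{\ast}$ with boundary term $[u\varphi^{\ast}]_{-\infty}^{+\infty}$, or fix a standard interval $[a,b]$ containing $\mathrm{supp}\,\varphi$ and note that the integrals over $\mathbb{R}^{\ast}$ and over $[a,b]^{\ast}$ coincide since $\varphi^{\ast}$ vanishes outside $[a,b]^{\ast}$. In either case the boundary term is $0$ because $\varphi^{\ast}$ vanishes there, giving $\int^{\ast}Du\,\varphi^{\ast}\,dx=-\int^{\ast}u\,D\varphi^{\ast}\,dx=-\int^{\ast}u\,(\partial\varphi)^{\ast}\,dx$, using (\ref{zumpappero}) for the last equality. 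This is exactly the claim for $k=1$.

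For the inductive step, assume the identity for $k$ and prove it for $k+1$. Write $D^{k+1}u=D(D^{k}u)$ and apply the $k=1$ case to the ultrafunction $D^{k}u\in V_{\Lambda}(\mathbb{R})$: $\int^{\ast}D^{k+1}u\cdot\varphi^{\ast}\,dx=-\int^{\ast}D^{k}u\cdot(\partial\varphi)^{\ast}\,dx$. Since $\partial\varphi\in\mathcal{D}(\mathbb{R})$ as well, the inductive hypothesis applies with test function $\partial\varphi$, yielding $-\int^{\ast}D^{k}u\cdot(\partial\varphi)^{\ast}\,dx=-(-1)^{k}\int^{\ast}u\cdot\partial^{k}(\partial\varphi)^{\ast}\,dx=(-1)^{k+1}\int^{\ast}u\cdot\partial^{k+1}\varphi^{\ast}\,dx$, which closes the induction. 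One subtlety: the statement is for $k\in\mathbb{N}^{\ast}$, i.e. $k$ may be a nonstandard (hyperfinite) natural number, so ordinary external induction is not enough. To handle this I would instead phrase the whole argument by internal induction / transfer: the finite-$k$ identities are $\Lambda$-limits of the corresponding standard finite-$k$ identities for functions in $V_{\lambda}(\mathbb{R})$, and the result for hyperfinite $k$ follows by the Transfer Principle (Th.~\ref{billo}).

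The main obstacle is precisely this last point — making the passage to nonstandard $k$ rigorous rather than waving at "induction." I expect the cleanest route is to prove, at the level of the approximating finite-dimensional spaces $V_{\lambda}(\mathbb{R})$, the standard statement "for all $k\in\mathbb{N}$, all $u\in V_{\lambda}$, all $\varphi\in\mathcal{D}(\mathbb{R})$ with $\mathrm{supp}\,\varphi$ in a fixed compact set, $\int D^{k}u\cdot\varphi=(-1)^{k}\int u\cdot\partial^{k}\varphi$" (a routine iterated integration by parts, valid since $u\in\mathcal{C}^{1}$ and all boundary terms vanish), and then take $\Lambda$-limits; the quantifier over $k\in\mathbb{N}$ becomes a quantifier over $k\in\mathbb{N}^{\ast}$ after transfer. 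The vanishing of boundary terms and the identification $D\varphi^{\ast}=(\partial\varphi)^{\ast}$ are genuinely easy; the only care needed is bookkeeping to ensure the integration by parts is legitimate (which it is, since the $f_{k}$ and test functions are $\mathcal{C}^{1}$ with compact support for $\varphi$).
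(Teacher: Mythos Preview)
Your proposal is correct and follows essentially the same approach as the paper: fix a standard interval $[a,b]$ containing $\mathrm{supp}\,\varphi$, apply the integration-by-parts identity (\ref{manola}) so that the boundary term $[D^{k}u\cdot\varphi^{\ast}]_{a}^{b}$ vanishes, use (\ref{zumpappero}) to identify $D\varphi^{\ast}=(\partial\varphi)^{\ast}$, and induct. The paper handles the nonstandard $k\in\mathbb{N}^{\ast}$ by simply declaring the argument to be an \emph{internal} induction (starting from $k=0$), whereas you propose the equivalent route of proving the identity at the $V_{\lambda}$-level for all standard $k$ and transferring; both are valid and amount to the same thing.
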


\begin{proof} Let $a,b\in \mathbb{R}$ such that $\varphi \in \mathcal{D}(%
\left[ a,b\right] \mathbb{)}.$ We work by internal induction on $k$: if $k=0$
there is nothing to prove. Let us suppose the statement true for $k$. Then,
by $\left( \ref{manola}\right) ,$ we have%
\begin{equation*}
\int^{\ast }D^{k+1}(u(x))\varphi ^{\ast }(x)dx=\int^{\ast
}D(D^{k}(u(x)))\varphi ^{\ast }(x)dx=
\end{equation*}%
\begin{equation*}
-\int^{\ast }D^{k}(u(x))D\varphi ^{\ast }(x)dx+\left[ D^{k}u\cdot \varphi
^{\ast }\right] _{a}^{b}.
\end{equation*}

Since $\varphi \in \mathcal{D}([a,b]\mathbb{)}$ we have $\left[ D^{k}u\cdot
\varphi ^{\ast }\right] _{-\beta }^{\beta }=0.$ Moreover, since $\varphi \in 
\mathcal{D}$ then (by (\ref{zumpappero})) we have $D\varphi ^{\ast }=\left(
\partial \varphi \right) ^{\ast }\in \mathcal{D}([a,b]\mathbb{)}^{\ast }$.
So by the induction hypothesis we have%
\begin{equation*}
-\int^{\ast }D^{k}(u(x))D\varphi ^{\ast }(x)dx=-\int^{\ast
}D^{k}(u(x))\partial \varphi ^{\ast }(x)dx=
\end{equation*}%
\begin{equation*}
(-1)^{k+1}\int^{\ast }u(x)\partial ^{k+1}\varphi ^{\ast }(x)dx,
\end{equation*}%
and the thesis is proved.\end{proof}

\bigskip

\begin{proof} Proof of Proposition \ref{barbapapa}: Let us suppose that supp$%
(\varphi )\subseteq \lbrack a,b],$ where $a,b\in \mathbb{R}$ and supp$%
(\varphi ),$ as usual, is the support of $\varphi .$ By $\left( \ref{jane}%
\right) $ we have that 
\begin{equation*}
\int^{\ast }\widetilde{T}(x)\varphi ^{\ast }(x)dx=\sum_{k=0}^{N^{\ast
}(-\beta ,\beta )}\int^{\ast }D^{k}\widetilde{f_{k}}(x)\widetilde{\varphi }%
(x)dx.
\end{equation*}%
As a consequence of Lemma \ref{derivatona}, we obtain that%
\begin{equation*}
\sum_{k=0}^{N^{\ast }(-\beta ,\beta )}\int^{\ast }D^{k}\widetilde{f_{k}}(x)%
\widetilde{\varphi }(x)dx=\sum_{k=0}^{N^{\ast }(-\beta ,\beta )}\left(
-1\right) ^{k}\int^{\ast }f_{k}\partial ^{k}\varphi (x)dx,
\end{equation*}%
and since supp$(\varphi )\subseteq \lbrack a,b]\subset \lbrack -\beta ,\beta
]$ we get%
\begin{equation*}
\sum_{k=0}^{N^{\ast }(-\beta ,\beta )}\left( -1\right) ^{k}\int^{\ast
}f_{k}\partial ^{k}\varphi (x)dx=\sum_{k=0}^{N^{\ast }(a,b)}\left( -1\right)
^{k}\int^{\ast }f_{k}\partial ^{k}\varphi (x)dx=
\end{equation*}%
\begin{equation*}
\sum_{k=0}^{N(a,b)}\left( -1\right) ^{k}\int f_{k}\partial ^{k}\varphi
(x)dx=\left\langle T,\varphi \right\rangle .
\end{equation*}\end{proof}

\bigskip

Notice that (\ref{katy}) is not sufficient to characterize the ultrafuction $%
\widetilde{T};$ namely there are infinitely many ultrafunctions $u$ such
that $\int^{\ast }u\widetilde{\varphi }dx=\left\langle T,\varphi
\right\rangle $ \ for all $\varphi \in \mathcal{D}(\mathbb{R})$ (for a proof
of this fact, see \cite{belu2013}, Proposition 30).

\section{Some applications to critical point theory}

\subsection{A general minimization result}

A space of ultrafunctions has a "lot of compactness" since it is the $%
\Lambda $-limit of a net of finite dimensional spaces. Probably, the
simplest example to show this fact is the following theorem:

\begin{theorem}
\label{B}Let%
\begin{equation*}
J:V\left( \Omega \right) \rightarrow \mathbb{R}
\end{equation*}%
be an operator continuous and coercive on finite dimensional spaces. Then the
operator%
\begin{equation*}
\widetilde{J}:V_{\Lambda }\left( \Omega \right) \rightarrow \mathbb{R}^{\ast
}
\end{equation*}%
has a minimum point. If $J$ itself has a minimizer $u,$ then $u^{\ast }$ is
a minimizer of $\widetilde{J}.$
\end{theorem}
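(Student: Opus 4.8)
The plan is to exploit the fact that $V_\Lambda(\Omega)$ is a $\Lambda$-limit of the finite-dimensional spaces $V_\lambda(\Omega)$, and that on each such space the hypotheses of continuity and coercivity give a classical minimizer by Weierstrass; then I transfer this family of minimizers through the $\Lambda$-limit. First I would observe that for each $\lambda\in\mathfrak L$ the restriction $J_\lambda:=J\!\restriction_{V_\lambda(\Omega)}$ is a continuous, coercive function on the finite-dimensional normed space $V_\lambda(\Omega)\cong\mathbb R^{\dim V_\lambda}$. Coercivity means the sublevel sets $\{v\in V_\lambda(\Omega):J_\lambda(v)\le c\}$ are bounded, hence (being closed by continuity and finite-dimensional) compact, so $J_\lambda$ attains a minimum at some $u_\lambda\in V_\lambda(\Omega)$; that is, $\forall v\in V_\lambda(\Omega),\ J_\lambda(u_\lambda)\le J_\lambda(v)$.

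Next I would set $u:=\lim_{\lambda\uparrow\Lambda}u_\lambda\in V_\Lambda(\Omega)$, which is a well-defined ultrafunction since each $u_\lambda\in V_\lambda(\Omega)$. The claim is that $u$ minimizes $\widetilde J=J^\ast\!\restriction_{V_\Lambda(\Omega)}$. To see this, take any $w\in V_\Lambda(\Omega)$ and write $w=\lim_{\lambda\uparrow\Lambda}w_\lambda$ with $w_\lambda\in V_\lambda(\Omega)$. For every $\lambda$ we have the relation $J_\lambda(u_\lambda)\le J_\lambda(w_\lambda)$, i.e. $J(u_\lambda)\le J(w_\lambda)$ (both arguments lie in $V_\lambda(\Omega)\subseteq V(\Omega)$). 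Since this holds on the qualified set $\mathfrak L$ itself, the Transfer Principle (Theorem \ref{billo}) applied to the relation "$\le$" between the nets $\lambda\mapsto J(u_\lambda)$ and $\lambda\mapsto J(w_\lambda)$ yields
\begin{equation*}
\lim_{\lambda\uparrow\Lambda}J(u_\lambda)\ \le^\ast\ \lim_{\lambda\uparrow\Lambda}J(w_\lambda).
\end{equation*}
By the definition of the natural extension of a function, $\lim_{\lambda\uparrow\Lambda}J(u_\lambda)=J^\ast(u)=\widetilde J(u)$ and likewise $\lim_{\lambda\uparrow\Lambda}J(w_\lambda)=\widetilde J(w)$, so $\widetilde J(u)\le\widetilde J(w)$. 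As $w$ was arbitrary in $V_\Lambda(\Omega)$, the ultrafunction $u$ is a minimum point of $\widetilde J$.

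For the last assertion, suppose $J$ attains its minimum over $V(\Omega)$ at $u$. Then $u^\ast\in V(\Omega)^\sigma\subseteq V_\Lambda(\Omega)$ (using Remark \ref{nina}, since $u\in V(\Omega)$), and for every $w=\lim_{\lambda\uparrow\Lambda}w_\lambda\in V_\Lambda(\Omega)$ the relation $J(u)\le J(w_\lambda)$ holds for all $\lambda$, so transfer gives $J^\ast(u^\ast)\le^\ast\lim_{\lambda\uparrow\Lambda}J(w_\lambda)=\widetilde J(w)$; since $\widetilde J(u^\ast)=J^\ast(u^\ast)$, this shows $u^\ast$ minimizes $\widetilde J$. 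The only delicate point is making sure that "continuous and coercive on finite dimensional spaces" is being used exactly where needed — continuity to get closed sublevel sets and coercivity to get bounded ones, so that Weierstrass applies on each $V_\lambda(\Omega)$ — and that the choice net $\lambda\mapsto u_\lambda$ is legitimate; since each $u_\lambda$ is merely required to exist (no canonicity needed), the $\Lambda$-limit $u$ is well-defined, and the argument is complete. The main obstacle, such as it is, is bookkeeping: keeping straight that the minimizing inequality is a relation holding on a qualified set so that Theorem \ref{billo} converts it directly into the desired inequality in $\mathbb R^\ast$, with no appeal to a formal language needed.
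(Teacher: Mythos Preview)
Your proof is correct and follows essentially the same route as the paper: minimize $J$ on each finite-dimensional $V_\lambda$, take the $\Lambda$-limit of the minimizers, and transfer the minimizing inequality via Theorem \ref{billo}. The only differences are cosmetic---the paper transfers the single relation $x\mathcal{R}Y:=\forall v\in Y,\ J(x)\le J(v)$ in one stroke rather than pointwise per competitor $w$, and for the final assertion it argues that $u_\lambda$ is eventually equal to the global minimizer $\bar u$, whereas you (arguably more cleanly) verify directly that $\bar u^\ast$ minimizes $\widetilde J$.
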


\begin{proof} Take $\lambda \in \mathfrak{L}$; since the operator 
\begin{equation*}
J|_{V_{\lambda }}:V_{\lambda }\left( \Omega \right) \rightarrow \mathbb{R}
\end{equation*}%
is continuous and coercive, it has a minimizer; namely%
\begin{equation*}
\exists u_{\lambda }\in V_{\lambda },\ \forall v\in V_{\lambda },\
J(u_{\lambda })\geq J(v).
\end{equation*}%
Now set%
\begin{equation*}
u_{\Lambda }=\ \lim_{\lambda \uparrow \Lambda }\ u_{\lambda }.
\end{equation*}
We will show that $u_{\Lambda }$ is minimizer of $\widetilde{J}.$ We apply
Th. \ref{billo} with 
\begin{equation*}
x\mathcal{R}Y:=\forall v\in Y,\ J(x)\geq J(v).
\end{equation*}%
Then, since $V_{\Lambda }\left( \Omega \right) =\lim_{\lambda \uparrow
\Lambda }\ V_{\lambda },$ the following relation holds: 
\begin{equation*}
\forall v\in V_{\Lambda }\left( \Omega \right) ,\ J^{\ast }(u_{\Lambda
})\geq J^{\ast }(v).
\end{equation*}

If $J$ itself has a minimizer $\bar{u},$ then $u_{\lambda }$ is eventually
equal to $\bar{u}$ and hence $u_{\Lambda }=\bar{u}^{\ast }.$\end{proof}

\bigskip

\textbf{Example}: Let us consider a classical problem of calculus of
variations: minimize the functional%
\begin{equation}
J(u)=\int F(x,u,\nabla u)dx  \label{xxx}
\end{equation}%
in the function space $\mathcal{C}_{0}^{1}(\Omega )=\mathcal{C}^{1}(\Omega
)\cap \mathcal{C}_{0}(\overline{\Omega }).$ Here we assume $\Omega $ to be
bounded so we do not have problems of summability.

It is well known that in general this problem has no solution even when $F$
is coercive and the infimum exists. However, if $F$ is convex and $%
\partial \Omega $ is sufficiently smooth, it is possible to find a minimizer
in a suitable Sobolev space (or in some "Sobolev type" space such as Orliz
spaces).

If $F$ is not convex it is not possible to find a minimizer, not even among
the generalized functions of "Sobolev" type, as the following example shows:%
\begin{equation}
\text{\textit{minimize}}\ \ \ J_{0}(u)=\int_{0}^{1}\left[ \left( \left\vert
\nabla u\right\vert ^{2}-1\right) ^{2}+|u|^{2}\right] dx\ \ \ \text{in}\ \ \ 
\mathcal{C}_{0}^{1}(0,1).  \label{lilla}
\end{equation}

It is not difficult to realize that any minimizing sequence $u_{n}$
converges uniformly to $0$ and that $J_{0}(u_{n})\rightarrow 0,$ but $%
J_{0}(0)>0$ for any $u\in \mathcal{C}_{0}^{1}(0,1)$ (and also for any $u\in
W_{0}^{1,4}(0,1)$).

On the contrary, it is possible to show that these problems have minimizers
in spaces of ultrafunctions; a natural space to work in is 
\begin{eqnarray*}
V_{0}^{1}(\Omega ) &=&\left\{ u\in V_{\Lambda }^{1}(\Omega )\ |\ u(x)=0\ \ 
\text{for\ every\ }x\in \partial \Omega ^{\ast }\right\} \\
&=&\left[ \mathcal{C}^{1}(\Omega )\cap \mathcal{C}_{0}(\overline{\Omega })%
\right] _{\Lambda }.
\end{eqnarray*}%
So our problem becomes%
\begin{equation*}
\underset{u\in V_{0}^{1}(\Omega )}{\min }J^{\ast }(u).
\end{equation*}

\begin{theorem}
Assume that $F$ is continuous and that 
\begin{equation}
F(x,u,\xi )\geq a(\xi )-M,  \label{glu}
\end{equation}%
where $a(\xi )\rightarrow +\infty $ as $\xi \rightarrow +\infty $ and $M$ is
a constant. Then, if $J$ is given by (\ref{xxx}),%
\begin{equation*}
\underset{u\in V_{0}^{1}(\Omega )}{\min }\widetilde{J}(u)
\end{equation*}%
exists.
\end{theorem}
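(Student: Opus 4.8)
The plan is to reduce the statement to an application of Theorem~\ref{B}, by verifying that the functional $J(u)=\int F(x,u,\nabla u)\,dx$ is continuous and coercive on every finite-dimensional subspace of $V(\Omega)=\mathcal{C}^{1}(\Omega)\cap\mathcal{C}_{0}(\overline{\Omega})$, where $\Omega$ is bounded. Once this is done, Theorem~\ref{B} immediately yields that $\widetilde{J}$ has a minimum point on $V_{\Lambda}(\Omega)$; restricting attention to the subspace $V_{0}^{1}(\Omega)$ (which is itself of hyperfinite dimension, being a $\Lambda$-limit of finite-dimensional spaces) gives the claimed minimizer. So the whole proof hinges on the two words "continuous" and "coercive on finite-dimensional spaces."

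Continuity is routine: on a fixed finite-dimensional space $V_{\lambda}$ spanned by finitely many $\mathcal{C}^{1}$ functions, any $u$ and its gradient are determined continuously (indeed linearly) by the finitely many coordinates, so $(x,u(x),\nabla u(x))$ varies continuously in all parameters; since $F$ is continuous and $\overline{\Omega}$ is compact, $J$ depends continuously on the coordinates by uniform continuity of $F$ on compact sets together with dominated convergence (or simply uniform convergence of the integrands on $\overline{\Omega}$). I would spell this out only briefly.

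The main work — and the main obstacle — is coercivity on finite-dimensional subspaces. Fix a finite-dimensional $W\subseteq V(\Omega)$ with a basis $e_{1},\dots,e_{m}$, write $u=\sum c_{i}e_{i}$, and let $\|c\|\to\infty$. From the growth hypothesis (\ref{glu}), $J(u)\ge \int_{\Omega}a(\nabla u)\,dx - M|\Omega|$, so it suffices to show $\int_{\Omega}a(\nabla u)\,dx\to+\infty$. The delicate point is that $a(\xi)\to+\infty$ only as $\xi\to+\infty$, so I must rule out the possibility that $\nabla u$ stays bounded on a large part of $\Omega$ while $\|c\|\to\infty$. On the finite-dimensional space $W$, the seminorm $u\mapsto\|\nabla u\|_{L^{\infty}(\Omega)}$ and the seminorm $u\mapsto\|\nabla u\|_{L^{2}(\Omega)}$ are comparable (all seminorms being comparable in finite dimensions), and the full norm $\|c\|$ is comparable to $\|u\|_{\mathcal{C}^{1}}=\|u\|_{\infty}+\|\nabla u\|_{\infty}$; if along a sequence $\|\nabla u\|_{\infty}$ stayed bounded while $\|c\|\to\infty$, then $\|u\|_{\infty}\to\infty$, but then (using again that on $W$ the map $u\mapsto\|u\|_{\infty}$ is a genuine norm and $u(x)=0$ on $\partial\Omega$ forces, via $u=\int$ of its derivative, control of $\|u\|_{\infty}$ by $\|\nabla u\|_{\infty}$) we reach a contradiction. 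Hence $\|\nabla u\|_{L^{\infty}(\Omega)}\to\infty$, so on a subset of $\Omega$ of positive measure — here one needs a quantitative lower bound, e.g. that $\{x:|\nabla u(x)|\ge \tfrac12\|\nabla u\|_{\infty}\}$ has measure bounded below along the sequence, which follows from equivalence of the $L^{\infty}$ and $L^{2}$ seminorms on $W$ — the integrand $a(\nabla u)$ is large, forcing $\int_{\Omega}a(\nabla u)\,dx\to+\infty$. Assembling these estimates gives coercivity of $J|_{W}$, and Theorem~\ref{B} finishes the argument.

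I expect the quantitative "large gradient on a set of positive, uniformly bounded-below measure" step to be the one requiring the most care, since it is exactly where finite-dimensionality is genuinely used and where a sloppy argument could fail; everything downstream of it (invoking Theorem~\ref{B}, and the remark that a classical minimizer $u$ maps to $u^{\ast}$) is then immediate.
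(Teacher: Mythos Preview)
Your proposal is correct and follows exactly the paper's approach: reduce to Theorem~\ref{B} by checking continuity and coercivity of $J$ on finite-dimensional subspaces. The paper's own proof is literally the single sentence ``It is immediate to check that the assumptions of Th.~\ref{B} are verified,'' so you have simply supplied the details the authors left to the reader---in particular the coercivity argument via equivalence of norms on finite-dimensional subspaces of $\mathcal{C}^{1}(\Omega)\cap\mathcal{C}_{0}(\overline{\Omega})$, which is indeed the only place requiring any thought (your threshold $\tfrac12$ may need to be replaced by a smaller constant depending on the norm-equivalence constants and $|\Omega|$, but the mechanism is right).
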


\begin{proof} It is immediate to check that the assumptions of Th. \ref{B}
are verified.\end{proof}

\bigskip

In particular, this result applies to the functional (\ref{lilla}). It is
not difficult to show that, $\forall x\in (0,1)^{\ast },$ the minimizer $%
u_{\Lambda }(x)\sim 0$ and that $J_{0}(u_{\Lambda })$ is a positive
infinitesimal.

\subsection{Mountain pass theorem for ultrafunctions}

The Mountain Pass theorem of Ambrosetti and Rabinowitz \cite{AmRa1973} is a
well known theorem in Nonlinear Analysis with lots of applications. Next, we
will present one of the possible variants in the framework of ultrafunction:

\begin{theorem}
\label{MPT}(\textbf{Mountain pass theorem for ultrafunctions}) Let%
\begin{equation*}
J:V\rightarrow \mathbb{R}
\end{equation*}%
be an operator differentiable on the finite dimensional spaces. Assume the
following geometrical assumptions: $J(0)=0$, $0$ is an isolated minimum of $%
J $ and 
\begin{equation*}
\ \underset{u\in V_{\lambda };\ u\rightarrow \infty }{\lim \inf }J(u)\leq 0.
\end{equation*}%
Then $\widetilde{J}$ has a strictly positive critical value.
\end{theorem}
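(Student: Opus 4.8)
The plan is to reduce the ultrafunction statement to a family of classical finite-dimensional Mountain Pass theorems, one for each $\lambda\in\mathfrak{L}$, and then take $\Lambda$-limits via the Transfer Principle (Th.~\ref{billo}), exactly in the spirit of the proof of Th.~\ref{B}. First I would fix $\lambda\in\mathfrak{L}$ and look at $J|_{V_\lambda(\Omega)}\colon V_\lambda(\Omega)\to\mathbb{R}$, which is differentiable by hypothesis. The geometric assumptions $J(0)=0$, $0$ an isolated minimum of $J$, and $\liminf_{u\in V_\lambda,\ u\to\infty}J(u)\le 0$ should be checked to supply, for each sufficiently large $\lambda$ (those containing the index $a_0$ with $e_{a_0}=0$, or after noting $0\in V_\lambda$ always since $0$ is in every subspace), the classical Mountain Pass geometry on the finite-dimensional space $V_\lambda$: there is a small sphere $\partial B_{\rho_\lambda}(0)$ on which $J\ge\alpha_\lambda>0$ (from $0$ being an isolated minimum with $J(0)=0$), and there is a point $e_\lambda$ outside that ball with $J(e_\lambda)\le 0$ (from the $\liminf$ condition). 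Hence the classical Ambrosetti--Rabinowitz theorem in finite dimensions — where the Palais--Smale condition is automatic because closed bounded sets are compact — yields a critical point $u_\lambda\in V_\lambda$ with critical value $c_\lambda:=J(u_\lambda)\ge\alpha_\lambda>0$.

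Next I would set $u_\Lambda=\lim_{\lambda\uparrow\Lambda}u_\lambda$ and $c=\lim_{\lambda\uparrow\Lambda}c_\lambda\in\mathbb{R}^\ast$, and transfer two relations. Applying Th.~\ref{billo} to the relation ``$x$ is a critical point of $J$ restricted to $Y$'' (i.e. $\forall v\in Y,\ dJ(x)[v]=0$, which is a bona fide relation between $x$, $Y$ and the net of differentials) gives that $u_\Lambda$ is a critical point of $\widetilde{J}=J^\ast|_{V_\Lambda(\Omega)}$, since $V_\Lambda(\Omega)=\lim_{\lambda\uparrow\Lambda}V_\lambda$. Applying Th.~\ref{billo} to the relation $J(u_\lambda)=c_\lambda$ gives $\widetilde{J}(u_\Lambda)=c$. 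Finally, applying it to $c_\lambda\ge\alpha_\lambda$ together with $\alpha_\lambda>0$ yields $c>0$ in $\mathbb{R}^\ast$ — here one must be a little careful: ``$>0$'' transfers to ``$>0$ in $\mathbb{R}^\ast$'', so $c$ is a \emph{strictly positive} (possibly infinitesimal) hyperreal, which is exactly the claimed ``strictly positive critical value.''

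The main obstacle I expect is making the finite-dimensional Mountain Pass step genuinely uniform/legitimate as a statement about the net $\lambda\mapsto(V_\lambda, J|_{V_\lambda})$, so that Transfer applies cleanly. Concretely: (i) one needs the mountain pass \emph{value} $c_\lambda=\inf_{\gamma}\max_t J(\gamma(t))$ over paths $\gamma$ joining $0$ to $e_\lambda$ in $V_\lambda$ to be well-defined and $\ge\alpha_\lambda$, and (ii) one needs a single ``relation'' encoding ``$u_\lambda$ is a critical point with $J(u_\lambda)=c_\lambda$'' whose $\Lambda$-limit is meaningful — this is fine because each ingredient ($0$ isolated minimum, the sublevel point $e_\lambda$, differentiability, the deformation lemma) is a first-order statement about the finite-dimensional pair. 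A secondary subtlety is that $0$ being an isolated minimum of $J$ on $V(\Omega)$ must be inherited as an isolated minimum of $J$ on each $V_\lambda$ (true, since $V_\lambda\subseteq V(\Omega)$) with a radius $\rho_\lambda$ and level $\alpha_\lambda$ that need \emph{not} be uniform in $\lambda$ — but uniformity is not needed, since we only transfer the per-$\lambda$ inequality $c_\lambda\ge\alpha_\lambda>0$, and strict positivity is preserved by the $\Lambda$-limit even if $\inf_\lambda\alpha_\lambda=0$. I would also remark, as in the example following Th.~\ref{B}, that the resulting critical value may well be merely infinitesimal, which is consistent with (and in fact the point of) working in $V_\Lambda(\Omega)$.
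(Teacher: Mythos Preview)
Your proposal follows essentially the same route as the paper: apply the finite-dimensional Ambrosetti--Rabinowitz mountain pass theorem on each $V_\lambda$ to produce a critical point $u_\lambda$ with $J(u_\lambda)>0$, set $u_\Lambda=\lim_{\lambda\uparrow\Lambda}u_\lambda$, and use Transfer (Th.~\ref{billo}) on the relation $\forall v\in V_\lambda,\ J'(u_\lambda)[v]=0$ to conclude that $u_\Lambda$ is a critical point of $\widetilde{J}$ with strictly positive critical value.

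The one substantive difference is in how you justify the Palais--Smale condition on $V_\lambda$. You write that PS ``is automatic because closed bounded sets are compact,'' but this is not correct as stated: in finite dimensions bounded PS sequences have convergent subsequences, yet PS sequences need not be bounded (take $J(x)=e^{-x}$ on $\mathbb{R}$). The paper instead argues that the superlevel set $A_\lambda^{+}=\{u\in V_\lambda : J(u)\ge 0\}$ is compact, so that any PS sequence at a positive level is eventually trapped in $A_\lambda^{+}$ and hence bounded. You should replace your parenthetical with this argument (or some other device tying the geometric hypothesis $\liminf_{u\to\infty}J(u)\le 0$ to boundedness of the relevant PS sequences). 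Apart from this point, your more explicit bookkeeping of $c_\lambda\ge\alpha_\lambda>0$ and the transfer of strict positivity is a welcome addition that the paper leaves implicit.
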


\begin{proof} For every $\lambda \in \mathfrak{L}$ the operator%
\begin{equation*}
J|_{V_{\lambda }}:V_{\lambda }\rightarrow \mathbb{R}
\end{equation*}%
has a mountain pass point $u_{\lambda }\ $(notice that $V_{\lambda }$ is
eventually $\neq \emptyset$). This fact is easy to prove since the set $%
A_{\lambda }^{+}=\left\{ u\in V_{\lambda }\ |\ J(u)\geq 0\right\} $ is
compact and hence PS\footnote{%
We recall that the Palais -Smale condition (which is abbreviated by PS) is a
basic tool for Critical Point theory in infinite dimensional spaces.} holds (we refer the reader who is not familiar with
this topic to the original article \cite{AmRa1973}).

Then, as usual, we set%
\begin{equation*}
u_{\Lambda }=\ \lim_{\lambda \uparrow \Lambda }\ u_{\lambda }
\end{equation*}%
and we want to prove that $u_{\Lambda }$ is a critical value. We have that, $%
\forall \lambda \in \mathfrak{L}$, 
\begin{equation*}
\forall v\in V_{\lambda },\ J^{\prime }(u_{\lambda })\left[ v\right] =0.
\end{equation*}%
By virtue of Th. \ref{billo}, we can take the $\Lambda $-limit in the above
formula and we get that 
\begin{equation*}
\forall v\in V_{\Lambda },\ \widetilde{J^{\prime }}(u_{\Lambda })\left[ v%
\right] =0.
\end{equation*}%
Hence $u_{\Lambda }$ is a critical point of $\widetilde{J}$ and, if $J$
itself has a mountain pass point $\bar{u}$, then $u_{\lambda }$ is identically equal to $\bar{u}$
and hence $u_{\Lambda }$=$\bar{u}^{\ast }.$\end{proof}

\bigskip

\textbf{Example: }Let us consider the functional

\begin{equation}
J(u)=\frac{1}{p}\int \left\vert \nabla u\right\vert ^{p}dx-\int F(x,u)\ dx,\
\ p\geq 2.  \label{pippo}
\end{equation}%
where $F(x,u)$ is a continuous function which satisfies the following
assumptions:

\begin{enumerate}
\item $F$ is differentiable with respect to $u;$

\item $\exists a,M>0,$ and $q_{1}>p$ such that $|u|<a\Rightarrow F(x,u)\leq
M\ \left\vert u\right\vert ^{q_{1}};\ $

\item $\exists b,M>0,\ $and $q_{2}>p\ $such that $|u|>a\ \Rightarrow
F(x,u)\geq b\left\vert u\right\vert ^{q_{2}}$.
\end{enumerate}

It is easy to check that the above requests on $F$ imply the assumptions of
Th. \ref{MPT}. However, the above requests are not sufficient to ensure PS and
hence, in general, there is not a Mountain Pass solution of the functional (%
\ref{pippo}) in any Sobolev space.

\subsection{Critical points in $V_{\Lambda }$ versus critical points in
Sobolev spaces}

Theorem \ref{MPT} states some facts about the critical points of $\widetilde{%
J}$; the next theorem will establish some relations between the critical
points of $\widetilde{J}$ in $V_{\Lambda }$ and the critical points of $J$
in $V.$

The first result in this direction is (almost) trivial:

\begin{theorem}
\label{cecilia}Let $V$ be a Banach space and let%
\begin{equation*}
J:V\rightarrow \mathbb{R}
\end{equation*}%
be a differentiable operator. Then if $u_{0}$ is a critical point of $J,$ $%
u_{0}^{\ast }$ is a critical point of $\widetilde{J}:V_{\Lambda }\rightarrow 
\mathbb{R}.$
\end{theorem}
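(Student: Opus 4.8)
The plan is to derive the statement from the Transfer Principle (Theorem \ref{billo}), following the same pattern already used in the proof of the Mountain Pass theorem (Theorem \ref{MPT}). First I would unwind the hypothesis: $u_{0}$ being a critical point of $J$ means that the differential $J^{\prime }(u_{0})\in V^{\prime }$ vanishes, i.e. $J^{\prime }(u_{0})[v]=0$ for every $v\in V$. Since $u_{0}\in V$, there is a finite set $\lambda _{0}\in \mathfrak{L}$ (the indices of the Hamel-basis vectors appearing in the expansion of $u_{0}$) such that $u_{0}\in V_{\lambda }$ for every $\lambda \supseteq \lambda _{0}$, and the set $\mathfrak{Q}=\{\lambda \in \mathfrak{L}\mid \lambda \supseteq \lambda _{0}\}$ is qualified. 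Moreover, since $J^{\prime }(u_{0})$ kills every vector of $V$, it kills in particular every vector of the subspace $V_{\lambda }$; hence for each $\lambda \in \mathfrak{Q}$ the point $u_{0}$ is a critical point of the restriction $J|_{V_{\lambda }}:V_{\lambda }\rightarrow \mathbb{R}$, which is differentiable (being the restriction of a differentiable map to a subspace) with differential $(J|_{V_{\lambda }})^{\prime }(u_{0})=J^{\prime }(u_{0})|_{V_{\lambda }}$.

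Next I would transfer this. Apply Theorem \ref{billo} to the constant nets $\lambda \mapsto u_{0}$ and $\lambda \mapsto V_{\lambda }$ with the relation $x\mathcal{R}Y:=\forall v\in Y,\ J^{\prime }(x)[v]=0$, which holds on the qualified set $\mathfrak{Q}$. Taking $\Lambda $-limits and using $\lim_{\lambda \uparrow \Lambda }V_{\lambda }=V_{\Lambda }$ and $\lim_{\lambda \uparrow \Lambda }u_{0}=u_{0}^{\ast }$, we obtain that $u_{0}^{\ast }$ is a critical point of the restriction of $J^{\ast }$ to $V_{\Lambda }$, which is exactly $\widetilde{J}$. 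Equivalently, since the $\ast $-extension commutes with differentiation, $\widetilde{J}^{\prime }(u_{0}^{\ast })[v]=(J^{\prime })^{\ast }(u_{0}^{\ast })[v]=0$ for every $v\in V_{\Lambda }\subseteq V^{\ast }$, because the identity $(J^{\prime })^{\ast }(u_{0}^{\ast })[v]=0$ holds for all $v\in V^{\ast }$ by transfer of the statement $\forall v\in V,\ J^{\prime }(u_{0})[v]=0$.

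As the authors already flag, there is no substantial obstacle here; the only points that need a little care are (i) reading "critical point of $\widetilde{J}$" as "critical point of $J^{\ast }|_{V_{\Lambda }}$" and noting that the differential of this restriction is the restriction of $(J^{\ast })^{\prime }=(J^{\prime })^{\ast }$ to the hyperfinite-dimensional space $V_{\Lambda }$, and (ii) checking that "$u_{0}$ is a critical point of $J|_{V_{\lambda }}$" holds on a qualified set (not merely for some $\lambda $), which follows from $u_{0}\in V$ together with $J^{\prime }(u_{0})\equiv 0$ on all of $V$. Once these are in place the conclusion is immediate.
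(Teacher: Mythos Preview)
Your proposal is correct and follows essentially the same argument as the paper: pick $\lambda_{0}$ with $u_{0}\in V_{\lambda_{0}}$, note that $\forall v\in V_{\lambda},\ J^{\prime}(u_{0})[v]=0$ holds on the qualified set $\mathfrak{Q}(\lambda_{0})=\{\lambda\supseteq\lambda_{0}\}$, and apply Theorem~\ref{billo} to obtain $\forall v\in V_{\Lambda},\ \widetilde{J^{\prime}}(u_{0}^{\ast})[v]=0$. Your additional remarks (i) and (ii) and the alternative direct-transfer observation are correct elaborations but do not change the route.
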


\begin{proof} Let $u_{0}\in V$ be a critical point of $J.$ Take $\lambda
_{0}$ such that $u_{0}\in V_{\lambda _{0}}$. The set%
\begin{equation*}
\mathfrak{Q}(\lambda _{0})=\left\{ \lambda \in \mathfrak{L}\ |\ \lambda
\supset \lambda _{0}\right\}
\end{equation*}%
is qualified and we have that, $\forall \lambda \in \mathfrak{Q}(\lambda
_{0})$, 
\begin{equation*}
\forall v\in V_{\lambda },\ J^{\prime }(u_{0})\left[ v\right] =0.
\end{equation*}%
By virtue of Th. \ref{billo}, we can take the $\Lambda $-limit in the above
formula and we get that 
\begin{equation*}
\forall v\in V_{\Lambda },\ \widetilde{J^{\prime }}(u_{0}^{\ast })\left[ v%
\right] =0.
\end{equation*}
\end{proof}

\bigskip

The above theorem cannot be inverted in the sense that it is false that
every critical point of $\widetilde{J}$ corresponds to a critical point of $%
J.$ However, there are conditions which ensure the existence of critical
points of $J$ in $W$. More precisely the next theorem states that, under
suitable conditions, there is a critical point of $J$
"infinitely close" to any critical point of $\widetilde{J%
}$.
This theorem exploits a compactness condition which is a variant of the
usual Palais-Smale condition. Here, a variant of this condition is used just
to relate the critical points of $\widetilde{J}$ with the critical points of 
$J.$

\begin{definition}
\label{PSU}(\textbf{Palais-Smale condition for ultrafunctions (PSU))} Let $V$
be a Banach space. We say that the functional%
\begin{equation*}
J:V\rightarrow \mathbb{R}
\end{equation*}%
satisfies (PSU) in the interval $\left[ a,b\right] \subset \mathbb{R}$ if
every net $\left\{ u_{\lambda }\right\} _{\lambda \in \mathfrak{D}}$ ($%
\mathfrak{D}\subset \mathfrak{L}$) such that

\begin{itemize}
\item (A) $\forall \lambda \in \mathfrak{D,\ }J(u_{\lambda })\in \left[ a,b%
\right] ;$

\item (B) $\forall \lambda \in \mathfrak{D},\ \forall v\in V_{\lambda },\
dJ(u_{\lambda })\left[ v\right] =0;$

\end{itemize}

\noindent has a converging subnet in V.

\end{definition}

\begin{theorem}
\label{A}Assume the same framework and the same hypotheses of Th. \ref%
{cecilia}. Moreover, assume that $J$ satisfies (PSU) in the interval $\left[
a,b\right] .$ Then, if $\bar{u}$ is a critical point of $\widetilde{J}$ with 
$\widetilde{J}\left( \bar{u}\right) \in \left[ a,b\right] ^{\ast },$ there
exists a critical point $w$ of $J\in V$ such that%
\begin{equation*}
\left\Vert \bar{u}-w\right\Vert _{V^{\ast }}\sim 0.
\end{equation*}
\end{theorem}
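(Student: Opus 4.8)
The plan is to reverse-engineer a Palais--Smale-type net from the critical point $\bar u$ of $\widetilde J$ and then invoke (PSU). First I would write $\bar u = \lim_{\lambda\uparrow\Lambda} u_\lambda$ for some net $\{u_\lambda\}$ with $u_\lambda \in V_\lambda$; this is possible because $\bar u \in V_\Lambda(\Omega) = \lim_{\lambda\uparrow\Lambda} V_\lambda$. The hypothesis that $\bar u$ is a critical point of $\widetilde J$ means $\widetilde{J'}(\bar u)[v]=0$ for every $v\in V_\Lambda$, and the hypothesis $\widetilde J(\bar u)\in[a,b]^\ast$ is the $\Lambda$-limit version of $J(u_\lambda)\in[a,b]$. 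So the strategy is: use the Transfer Principle (Th.~\ref{billo}) in the \emph{reverse} direction to pull these two $\Lambda$-limit statements down to a qualified set of indices $\lambda$, obtaining a net that satisfies conditions (A) and (B) of Definition~\ref{PSU}.

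The key steps, in order, are as follows. (1) Apply Th.~\ref{billo} to the relation ``$x \in [a,b]$'' together with the net $\lambda \mapsto J(u_\lambda)$: since $\lim_{\lambda\uparrow\Lambda} J(u_\lambda) = \widetilde J(\bar u) \in [a,b]^\ast$, there is a qualified set $\mathfrak{Q}_1$ with $J(u_\lambda)\in[a,b]$ for all $\lambda\in\mathfrak{Q}_1$. (2) For condition (B) one needs $dJ(u_\lambda)[v]=0$ for all $v\in V_\lambda$; here one applies Th.~\ref{billo} to the relation ``$(x,Y): \forall v\in Y,\ dJ(x)[v]=0$'' with the nets $\lambda\mapsto u_\lambda$ and $\lambda\mapsto V_\lambda$. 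Since $\widetilde{J'}(\bar u)[v]=0$ for all $v\in V_\Lambda=\lim V_\lambda$, transfer gives a qualified set $\mathfrak{Q}_2$ on which $dJ(u_\lambda)[v]=0$ for every $v\in V_\lambda$. (3) Set $\mathfrak{D}=\mathfrak{Q}_1\cap\mathfrak{Q}_2$, which is qualified (hence nonempty and in particular a legitimate index set $\mathfrak{D}\subset\mathfrak{L}$) by Proposition~\ref{quaqua}. The net $\{u_\lambda\}_{\lambda\in\mathfrak{D}}$ now satisfies (A) and (B), so (PSU) provides a subnet converging in $V$ to some $w$. (4) Show $w$ is a critical point of $J$: continuity of $dJ$ on finite-dimensional spaces plus condition (B) along the subnet gives $dJ(w)[v]=0$ first for $v$ in each $V_\lambda$, and since $\bigcup_\lambda V_\lambda = V$ and $dJ(w)$ is continuous, $dJ(w)=0$ on all of $V$. (5) Finally, show $\|\bar u - w^\ast\|_{V^\ast}\sim 0$: the convergent subnet $u_{\lambda_i}\to w$ in $V$ means, for every standard $\varepsilon>0$, that $\|u_\lambda - w\|_V<\varepsilon$ on a qualified set; transferring via Th.~\ref{billo} yields $\|\bar u - w^\ast\|_{V^\ast}<\varepsilon$ for every standard $\varepsilon$, i.e.\ the norm is infinitesimal. (Abusing notation slightly, ``$w$'' in the statement stands for $w^\ast$, consistent with the paper's earlier usage.)

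The main obstacle is step (2): carefully phrasing the ``$\forall v$'' quantifier so that the Transfer Principle in the form of Th.~\ref{billo} applies cleanly. One has to encode the subspace $V_\lambda$ as the second argument of a relation $\mathcal{R}$ and check that $\mathcal{R}^\ast$ relating $\bar u$ to $V_\Lambda=\lim V_\lambda$ is exactly the statement ``$\forall v\in V_\Lambda,\ \widetilde{J'}(\bar u)[v]=0$'' — which requires knowing that $\widetilde{J'}$ (the restriction of $(J')^\ast$) behaves correctly under $\Lambda$-limits, i.e.\ that $\lim_{\lambda\uparrow\Lambda} dJ(u_\lambda)[v_\lambda] = \widetilde{J'}(\bar u)[\lim_{\lambda\uparrow\Lambda} v_\lambda]$. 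A secondary subtlety is that (PSU) gives convergence of a \emph{subnet} indexed by a cofinal subset of $\mathfrak{D}$, and one should check this subindex set is still qualified (or pass to it and re-run the transfer argument on it); since a subnet of a net over a qualified directed set need not automatically be qualified, the cleanest route is to observe that the convergence ``$\|u_\lambda - w\|_V \to 0$ along the subnet'' itself, being an eventual statement along a cofinal set, translates into ``$\|u_\lambda-w\|_V<\varepsilon$ on a qualified set'' by the standard argument used elsewhere in the paper. Everything else is routine application of the machinery already set up in Sections~\ref{lt} and~\ref{effo}.
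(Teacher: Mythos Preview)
Your steps (1)--(4) match the paper's setup (the paper is simply terser about extracting a net satisfying (A) and (B)). The genuine gap is in step (5), and you have in fact put your finger on it without resolving it. The claim that ``convergence along a cofinal subnet translates into $\|u_\lambda - w\|_V < \varepsilon$ on a qualified set'' is false in general: a cofinal subset of $\mathfrak{L}$ need not be qualified, and the ultrafilter $\mathcal{U}$ may simply miss the index set of your convergent subnet entirely. Concretely, if the net $\{u_\lambda\}$ admits two cofinal subnets converging to distinct critical points $w_1$ and $w_2$, at most one of the two index sets can lie in $\mathcal{U}$, so for the ``wrong'' choice of $w$ your transfer argument fails outright. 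Thus you cannot conclude that $\bar u$ is infinitely close to the \emph{particular} limit point handed to you by (PSU).

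The paper's fix is to abandon the specific subnet limit and work instead with the whole critical set $K_a^b = \{u\in V : dJ(u)=0,\ J(u)\in[a,b]\}$. One considers
\[
\mathfrak{L}_n = \left\{\lambda \in \mathfrak{L} : dist_V(u_\lambda,\, K_a^b \cap V_\lambda) \le \tfrac{1}{n}\right\}
\]
and argues by dichotomy: if some $\mathfrak{L}_{\bar n}$ were not qualified, its complement would be (Prop.~\ref{quaqua}.4), forcing $dist_{V^\ast}(\bar u, (K_a^b)^\ast) > 1/\bar n$ --- but this contradicts the existence of \emph{any} subnet of $\{u_\lambda\}$ converging to a point of $K_a^b$. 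Hence every $\mathfrak{L}_n$ is qualified, which gives $dist_{V^\ast}(\bar u, (K_a^b)^\ast \cap V_\Lambda) \sim 0$, so there is $v \in (K_a^b)^\ast$ with $\|\bar u - v\| \sim 0$. Finally, (PSU) makes $K_a^b$ compact, and the nonstandard characterization of compactness yields a standard $w \in K_a^b$ with $\|v - w\| \sim 0$, whence $\|\bar u - w\| \sim 0$. The passage through the set $K_a^b$ (rather than a single point) and the appeal to its compactness are exactly the ingredients your argument is missing.
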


\begin{remark}
Notice that, in the above theorem, it is possible that $\bar{u}\in V;$ then
in this case we have that $\bar{u}=w.$ Obviously, this facts always occur if 
$V$ is a Hilbert space and all the critical values of $J$ in $\left[ a,b%
\right] $ are not degenerate.
\end{remark}

\bigskip

\begin{proof} Proof of Th. \ref{A}: Since\textbf{\ }$\bar{u}\in V$ is a critical
point with critical value in $\left[ a,b\right] ,$ then%
\begin{equation*}
\bar{u}=\lim_{\lambda \uparrow \Lambda }\ u_{\lambda }
\end{equation*}%
where $\left\{ u_{\lambda }\right\} _{\lambda \in \mathfrak{L}}$ is a net
which satisfies (A) and (B) of (PSU). Thus $\left\{ u_{\lambda }\right\}
_{\lambda \in \mathfrak{L}}$ has a converging subnet $\left\{ u_{\lambda
}\right\} _{\lambda \in \mathfrak{A}}$ ($\mathfrak{A}\subset \mathfrak{L}$)
to a critical point $w\in V.$

Let $\ dist_{V}$ be the distance in $V.$ We set 
\begin{equation*}
\mathfrak{L}_{n}=\left\{ \lambda \in \mathfrak{L\ }|\ dist_{V}(u_{\lambda
},K_{a}^{b}\cap V_{\lambda })\leq \frac{1}{n}\right\} ,
\end{equation*}%
where $K_{a}^{b}$ is the set of critical points of $J$ with value in $\left[
a,b\right] .$

There are two possibilities:

\begin{itemize}
\item (A) $\exists \bar{n}\in \mathbb{N}$ such that the set $\mathfrak{L}_{%
\bar{n}}$ is not qualified (see sec. \ref{qs}), or

\item (B) $\forall n\in \mathbb{N}$, the set $\mathfrak{L}_{n}$ is qualified.
\end{itemize}

We will show that (A) cannot hold. We argue indirectly and suppose that (A)
holds. Then, by Prop. \ref{quaqua},4, the set $\mathfrak{Q}=\mathfrak{L}%
\backslash \mathfrak{L}_{\bar{n}}$ is qualified and since%
\begin{equation*}
\forall \lambda \in \mathfrak{Q},\ dist_{V}(u_{\lambda },K_{a}^{b}\cap
V_{\lambda })>\frac{1}{\bar{n}},
\end{equation*}%
we have that 
\begin{equation*}
dist_{V}^{\ast }(\bar{u},\left( K_{a}^{b}\right) ^{\ast }\cap V_{\Lambda })>\frac{1}{%
\bar{n}},
\end{equation*}%
and since $K_{a}^{b}\subset \left( K_{a}^{b}\right) ^{\ast }\cap V_{\Lambda },$ we have
that%
\begin{equation*}
\forall w\in K_{a}^{b},\ dist_{V_{\Lambda }}(\bar{u},w)>\frac{1}{\bar{n}},
\end{equation*}%
but this is not possible since $u_{\lambda }$ contains a subnet which
converges to a critical point.

Since (B) holds then, if we fix $n,$ we have that%
\begin{equation*}
\forall \lambda \in \mathfrak{L}_{n},\ dist_{V}(u_{\lambda },K_{a}^{b}\cap
V_{\lambda })<\frac{1}{n}.
\end{equation*}%
Then, taking the $\Lambda $-limit, we get that 
\begin{equation*}
dist_{V_{\Lambda }}(\bar{u},\left( K_{a}^{b}\right) ^{\ast }\cap V_{\Lambda })<\frac{1}{n}
\end{equation*}%
and, by the arbitrariness of $n,$ we get that $dist_{V_{\Lambda }}(\bar{u}%
,\left( K_{a}^{b}\right) ^{\ast }\cap V_{\Lambda })\sim 0$. In particular $\exists v\in
\left( K_{a}^{b}\right) ^{\ast }$ such that $\left\Vert \bar{u}-v\right\Vert
_{V_{\Lambda }}\sim 0.$ By (PSU) it follows that $K_{a}^{b}$ is compact so, by the
nonstandard characterization of compact sets, there exists $w\in K_{a}^{b}$
such that $\left\Vert w-v\right\Vert _{V}\sim 0.$ Concluding, $\left\Vert 
\bar{u}-w\right\Vert _{W}\sim 0.$\end{proof}

\section{Applications to boundary value problems for second order operators}

\subsection{The general procedure}

In this section we will describe the general procedure to deal with problems
of the type:%
\begin{equation*}
\text{\textit{Find}\ \ \ }u\in V(\Omega )\ \ \ \text{\textit{such that}}
\end{equation*}%
\begin{equation*}
A(u)=f,
\end{equation*}%
where $A:V(\Omega )\rightarrow W$ is a differential operator and $f\in W$.

The "typical" formulation of this problem in the framework of ultrafunction
is the following one:%
\begin{equation*}
\text{\textit{Find} \ }u\in V_{\Lambda }(\Omega )\ \ \text{\textit{such that}%
}
\end{equation*}%
\begin{equation}
\forall \varphi \in V_{\Lambda }(\Omega ),\ \int_{\Omega ^{\ast }}^{\ast
}A^{\ast }(u)\varphi dx=\int_{\Omega ^{\ast }}^{\ast }f^{\ast }\varphi dx. 
\tag{P}  \label{P}
\end{equation}%
Clearly this formulation is possible if $\int_{\Omega ^{\ast }}^{\ast
}f^{\ast }\varphi dx$ makes sense, namely if $W\subset V^{\prime }(\Omega )$
or if $f$ can be identified with a distribution $T_{f}.$ In this case, using
the definitions and the notation introduced in section \ref{effo}, problem
can be rewritten in the following way:%
\begin{equation*}
\text{\textit{Find}\ \ }u\in V_{\Lambda }(\Omega )\ \ \text{\textit{such that%
}}
\end{equation*}%
\begin{equation*}
\widetilde{A}(u)=\widetilde{f}.
\end{equation*}%
Following the general strategy in the theory of ultrafunction, Problem (P)
can be reduced to study the following approximate problems:%
\begin{equation*}
\text{\textit{Find} }u_{\lambda }\in V_{\lambda }(\Omega )\text{ \ \textit{%
such\ that}}
\end{equation*}%
\begin{equation*}
\forall \varphi \in V_{\lambda }(\Omega ),\ \int_{\Omega }A(u_{\lambda
})\varphi dx=\int_{\Omega }f\varphi dx.  
\end{equation*}%
The next steps consist in solving the approximate problems for
every $\lambda $ in a qualified set and in taking the $\Lambda $-limit.
Clearly, this strategy can be applied to a very large class of problems. In
the following sections we will see some of them with some details.

\subsection{The choice of the space $V(\Omega )$}

Let us consider the following abstract problem:%
\begin{equation*}
A(u)=f\ \ \text{in} \Omega \ +\ \text{boundary\ conditions,}
\end{equation*}%
where $\Omega $ is an open set in $\mathbb{R}^{N}$ and is $A$ is a
differential operator. Usually this kind of problems have a "natural space"
where to look for solutions; for example if $A$ is a second order
differential operator with sufficiently smooth coefficients, the \textit{%
natural space} where to look for solutions is $\mathcal{C}_{BC}^{2}(\Omega )$%
, namely the set of functions of class $\mathcal{C}^{2}$ which satisfy
suitable boundary conditions. However, many times the "natural space" is
inadequate to study the problem, since there is no solution in it. In
general the choice of the appropriate function space is part of the problem
itself. The appropriate function space is a space in which the problem is
well posed and (relatively) easy to be solved. For a very large class of
problems it is a Sobolev space. Some extra assumptions (such as the
regularity of $\partial \Omega $) might guarantee the existence of solutions
in the natural space.

The choice of the appropriate function space is somewhat arbitrary and it
might depend on the final goals. In the framework of ultrafunctions this
situation persists. However, in this case, there is a general rule: choose
the "natural space" $V(\Omega )$ and look for a generalized solution in $%
V_{\Lambda }(\Omega ).$ In the examples considered here we will follow this
general rule.

\subsection{Linear problems}

The most general linear operator of the second order is the following one: 
\begin{equation*}
Lu=\sum_{i,j}a_{ij}(x)\frac{\partial ^{2}u}{\partial x_{i}\partial x_{j}}%
+\sum_{i}b_{i}(x)\frac{\partial u}{\partial x_{i}}+c(x)u.
\end{equation*}%
So, let us consider the following problem:%
\begin{equation*}
\text{\textit{Find}\ \ }u\in \mathcal{C}_{BC}^{2}(\Omega )\ \ \text{\textit{%
such that}}
\end{equation*}%
\begin{equation*}
Lu=f.
\end{equation*}%
This problem, in the framework of ultrafunctions, becomes%
\begin{equation*}
\forall \varphi \in V_{\Lambda }(\Omega ),\ \int_{\Omega }L^{\ast }u\varphi
~dx=\int_{\Omega }f^{\ast }\varphi ~dx,
\end{equation*}%
where $V(\Omega )=\mathcal{C}_{BC}^{2}(\Omega ).$ Using the notation of
section \ref{effo}, we get:%
\begin{equation*}
\text{\textit{Find}\ \ \ }u\in V_{\Lambda }(\Omega )\ \ \ \text{\textit{such that}%
}
\end{equation*}%

\begin{equation*}\label{PL}
\widetilde{L}u=\widetilde{f}. 
\end{equation*}

In this case our problem becomes a finite dimensional problem from the space 
$V_{\lambda }$ to $V_{\lambda }$. Therefore the Fredholm alternative holds
for every $\lambda $ and hence, using the Transfer Principle, (Th. \ref%
{billo}), we get the following result:

\begin{theorem}
\label{LS}Either the problem%
\begin{equation*}
u\in V_{\Lambda }(\Omega ),\ \widetilde{L}u=0
\end{equation*}%
has infinitely many solutions or the problem%
\begin{equation*}
u\in V_{\Lambda }(\Omega ),\ \widetilde{L}u=\widetilde{f}
\end{equation*}%
has exactly one solution.
\end{theorem}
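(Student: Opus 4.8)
The plan is to reduce the infinite-dimensional (or rather hyperfinite-dimensional) statement to the classical Fredholm alternative applied at each finite level $\lambda$, and then push the dichotomy through the $\Lambda$-limit using the Transfer Principle (Theorem \ref{billo}). First I would fix $\lambda \in \mathfrak{L}$ and observe that $V_\lambda(\Omega)$ is a finite-dimensional vector space, that $L$ restricts to a linear map $L|_{V_\lambda}: V_\lambda(\Omega) \to V_\lambda(\Omega)$ once we interpret the equation in the weak form $\int_\Omega L u_\lambda \varphi\, dx = \int_\Omega f \varphi\, dx$ for all $\varphi \in V_\lambda$ (this is exactly the Faedo–Galerkin projection of $L$ onto $V_\lambda$, using that $V(\Omega) = \mathcal{C}^2_{BC}(\Omega)$ sits inside its own dual via the $L^2$-pairing). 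Since $L|_{V_\lambda}$ is a linear endomorphism of a finite-dimensional space, the classical Fredholm alternative gives: either $L|_{V_\lambda}$ is injective, in which case it is bijective and $L|_{V_\lambda} u_\lambda = f_\lambda$ has exactly one solution; or $L|_{V_\lambda}$ has nontrivial kernel, in which case $L|_{V_\lambda} u_\lambda = 0$ has at least a one-dimensional space of solutions, hence infinitely many.

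Next I would set up the relation to which Transfer is applied. Define for each $\lambda$ the truth value of the statement $S(\lambda) :=$ "the homogeneous problem $L|_{V_\lambda} u = 0$ has a nonzero solution in $V_\lambda$". By the finite-dimensional Fredholm alternative, exactly one of $S(\lambda)$ or its negation holds for each $\lambda$. By Proposition \ref{quaqua}(4), the set $\mathfrak{Q} = \{\lambda : S(\lambda)\}$ is qualified if and only if its complement is not; so exactly one of $\mathfrak{Q}$, $\mathfrak{L}\setminus\mathfrak{Q}$ is qualified. I would split into these two cases.

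In the first case, $\mathfrak{Q}$ is qualified, so on a qualified set the homogeneous problem has a nonzero solution $u_\lambda \in V_\lambda$ with $\|u_\lambda\| = 1$ (normalize). Taking the $\Lambda$-limit $u_\Lambda = \lim_{\lambda\uparrow\Lambda} u_\lambda$ and applying Theorem \ref{billo} to the relation "$u \in V_\lambda$ and $\|u\| = 1$ and $\forall \varphi \in V_\lambda,\ \int_\Omega L u\varphi\,dx = 0$", we get $u_\Lambda \in V_\Lambda(\Omega)$, $\|u_\Lambda\| = 1$, and $\widetilde{L}u_\Lambda = 0$; moreover scalar multiples $c\, u_\Lambda$ for $c \in \mathbb{R}^\ast$ give infinitely many solutions of $\widetilde{L}u = 0$. (One should check that $\widetilde{L}u_\Lambda = 0$ really is the $\Lambda$-limit reading of the weak homogeneous equation, which is immediate from the definition of $\widetilde{L}$ in section \ref{effo} and the fact that $V_\Lambda(\Omega) = \lim_{\lambda\uparrow\Lambda} V_\lambda$.) In the second case, $\mathfrak{L}\setminus\mathfrak{Q}$ is qualified, so on a qualified set $L|_{V_\lambda}$ is bijective and there is a unique $u_\lambda \in V_\lambda$ with $\int_\Omega L u_\lambda\varphi\,dx = \int_\Omega f\varphi\,dx$ for all $\varphi \in V_\lambda$. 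Set $u_\Lambda = \lim_{\lambda\uparrow\Lambda} u_\lambda$; by Transfer applied to the relation encoding "$u$ is the unique element of $V_\lambda$ solving the projected equation", $u_\Lambda$ solves $\widetilde{L}u = \widetilde{f}$ and is unique in $V_\Lambda(\Omega)$.

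The main obstacle, and the only point requiring genuine care rather than bookkeeping, is formulating the Fredholm dichotomy as a single first-order relation $\mathcal{R}$ between nets so that Theorem \ref{billo} applies cleanly — in particular handling "uniqueness" and "infinitely many" as $\Lambda$-limit statements. Uniqueness transfers to internal uniqueness (uniqueness within $V_\Lambda$, not within $V^\ast$), which is exactly what the theorem claims, so this is fine once stated carefully; "infinitely many" is obtained not by transferring a counting statement but by exhibiting the hyperfinite-dimensional kernel directly via scalar multiples of $u_\Lambda$. A secondary technical point is verifying that the weak Galerkin formulation $\int_\Omega L u_\lambda \varphi\,dx = \int_\Omega f\varphi\,dx$ for $\varphi \in V_\lambda$ indeed defines a linear endomorphism of $V_\lambda$ whose invertibility is governed by the Fredholm alternative; since $V_\lambda$ is finite-dimensional and the bilinear form $(u,\varphi)\mapsto \int_\Omega Lu\,\varphi\,dx$ restricted to $V_\lambda\times V_\lambda$ together with the $L^2$ inner product identifies this with an ordinary matrix, this is routine linear algebra and needs no regularity hypotheses on $\partial\Omega$.
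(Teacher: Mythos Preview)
Your proposal is correct and follows exactly the route the paper indicates: apply the finite-dimensional Fredholm alternative on each $V_\lambda$ and push the dichotomy to $V_\Lambda(\Omega)$ via the Transfer Principle (Theorem \ref{billo}). The paper's own argument is a one-sentence sketch (``the Fredholm alternative holds for every $\lambda$ and hence, using the Transfer Principle, \dots''), and your write-up simply fills in the details of that sketch, including the use of Proposition \ref{quaqua}(4) to split into the two qualified-set cases.
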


If $L$ restricted to $V_{\lambda }(\Omega )$ is symmetric then it has
exactly $\dim V_{\lambda }(\Omega )$ eigenvalues and the eigenfunctions form
an orthogonal basis. Taking the $\Lambda $-limit and using the transfer
principle we get the following result:

\begin{theorem}
If $L$ restricted to $V_{\lambda }(\Omega )$ is symmetric, the spectrum $\sigma (\widetilde{L})$ of $\widetilde{L}$  consists of a hyperfinite family $\left\{ \mu _{j}\right\}
_{j\in J}$ of hyperreal eigenvalues and there exists an orthonormal basis of
eigenfunctions $\left\{ e_{j}(x)\right\} _{j\in J}.$
\end{theorem}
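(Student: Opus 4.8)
The plan is to mimic exactly the structure used to prove Theorem~\ref{LS}: reduce the statement to a family of finite-dimensional spectral assertions indexed by $\lambda$, and then take the $\Lambda$-limit using the Transfer Principle (Th.~\ref{billo}). First I would fix $\lambda \in \mathfrak{L}$ and consider the operator $L|_{V_\lambda}$, which by hypothesis is symmetric on the finite-dimensional space $V_\lambda(\Omega)$ equipped with the $L^2$-scalar product inherited from $V_\Lambda(\Omega)$. By the classical spectral theorem for symmetric operators on finite-dimensional inner product spaces, $L|_{V_\lambda}$ has exactly $d(\lambda) := \dim V_\lambda(\Omega)$ real eigenvalues (counted with multiplicity), say $\mu_1(\lambda), \dots, \mu_{d(\lambda)}(\lambda)$, and there is an orthonormal basis $\{e_1(\lambda), \dots, e_{d(\lambda)}(\lambda)\}$ of $V_\lambda(\Omega)$ consisting of eigenfunctions, i.e. $L e_j(\lambda) = \mu_j(\lambda)\, e_j(\lambda)$ for each $j$.

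Next I would take the $\Lambda$-limit of all the data simultaneously. Set $\mathfrak{n} = \lim_{\lambda \uparrow \Lambda} d(\lambda)$, which by (\ref{anna}) is precisely the hyperfinite dimension of $V_\Lambda(\Omega)$; let $J = \{1,\dots,\mathfrak{n}\}$ be the corresponding hyperfinite index set. For each internal index $j \in J$ define $\mu_j = \lim_{\lambda \uparrow \Lambda} \mu_j(\lambda) \in \mathbb{R}^\ast$ and $e_j(x) = \lim_{\lambda \uparrow \Lambda} e_j(\lambda)(x) \in V_\Lambda(\Omega)$. Applying Th.~\ref{billo} to the relations "$\{e_1,\dots,e_{d}\}$ is an orthonormal basis of the ambient space", "$L e_j = \mu_j e_j$", and "the $\mu_j$ are real", which hold for every $\lambda$ (hence on the qualified set $\mathfrak{L}$ itself), we obtain that $\{e_j(x)\}_{j\in J}$ is an (internal) orthonormal basis of $V_\Lambda(\Omega)$ and that $\widetilde{L} e_j = \mu_j e_j$ with each $\mu_j$ hyperreal. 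Here one uses that, by the definition of $\widetilde{L}$ in section~\ref{effo}, $\widetilde{L}$ acting on $V_\Lambda(\Omega)$ is the $\Lambda$-limit of the operators $L|_{V_\lambda}$, so the eigenvalue relation transfers verbatim.

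Finally I would check that $\sigma(\widetilde{L})$ equals $\{\mu_j\}_{j\in J}$. Since $V_\Lambda(\Omega)$ is a hyperfinite-dimensional space and $\widetilde{L}$ is an internal linear operator on it diagonalized by the basis $\{e_j\}$, the relation "$\lambda_0$ belongs to the spectrum of the operator $\iff$ $\lambda_0 = \mu_j$ for some $j$" holds for each $V_\lambda$ in the classical finite-dimensional sense, and transfers by Th.~\ref{billo} to give $\sigma(\widetilde{L}) = \{\mu_j\}_{j\in J}$ as a hyperfinite family. The main obstacle — really the only subtle point — is bookkeeping: one must ensure the eigenvalues for different $\lambda$ are labelled coherently (e.g. in nondecreasing order, $\mu_1(\lambda) \le \mu_2(\lambda) \le \cdots$) so that the nets $j \mapsto \mu_j(\lambda)$ and $j \mapsto e_j(\lambda)$ are genuinely defined for $\lambda$ in a qualified set and their $\Lambda$-limits are well posed; with the ordering convention this is routine. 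One should also note that the $\mu_j$ need not be distinct and may be infinite or infinitesimal hyperreals, which is why the conclusion speaks of a hyperfinite \emph{family} rather than a set.
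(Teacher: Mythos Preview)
Your proposal is correct and follows exactly the approach indicated in the paper: apply the finite-dimensional spectral theorem on each $V_\lambda(\Omega)$ and then take the $\Lambda$-limit using Th.~\ref{billo}. The paper gives only the one-sentence sketch preceding the theorem; you have filled in the details (hyperfinite indexing via (\ref{anna}), coherent labelling of eigenvalues, identification of $\widetilde{L}$ with the $\Lambda$-limit of the $L|_{V_\lambda}$) in a way fully consistent with the framework of sections~\ref{HE}--\ref{effo}.
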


In this case, if $0\notin \sigma (\widetilde{L}),$ the solution of problem (%
\ref{PL}) exists and it can be written explicitly as follows:%
\begin{equation*}
u(x)=\sum_{j\in J}\frac{1}{\mu _{j}}\left( \int_{\Omega }f^{\ast
}(y)e_{j}(y)dy\right) e_{j}(x).
\end{equation*}

\textbf{Example:} Take%
\begin{equation*}
Lu=\square u=\frac{\partial ^{2}u}{\partial t^{2}}-\frac{\partial ^{2}u}{%
\partial x^{2}}
\end{equation*}%
defined in $\Omega =(0,2\pi )\times \left( 0,L\right) $. We impose the
Dirichlet boundary condition with respect to $x$ and periodic boundary
conditions with respect to $t$, namely we take%
\begin{center}$
V_{BC}^{2}(\Omega )=$\\
$\left\{ v\in V_{\Lambda }^{2}(\Omega )\ |\ v(t,0)=v(t,L)=0,\ v(0,x)=v(2\pi ,L),\ v_t(0,x)=v_t(2\pi ,L)\right\}=$ \\
$\left[ \left\{ v\in \mathcal{C}^{2}(\Omega )\cap \mathcal{C}^{0}(%
\overline{\Omega })\ |\ v(t,0)=v(t,L)=0,\ v(0,x)=v(2\pi ,L),\ v_t(0,x)=v_t(2\pi ,L)\right\} \right]_{\Lambda }.$
\end{center}%
The eigenvalues of (the self-adjoint realization) of $L$ are given by $%
\left\{ (\frac{l\pi }{L})^{2}-k^{2}\right\} _{k,l\in \mathbb{N}}\ $and the
corresponding normalized eigenfunctions are $\left\{ \frac{1}{\pi }\sin (%
\frac{l\pi }{L}x)e^{ikt}\right\} _{k\in \mathbb{Z},l\in \mathbb{N}}.$

The eigenvalues$\ $and the eigenfunctions of $\widetilde{L}$ are formally
the same, but the indices $k,l$ range from $-$ $\kappa _{1}$ to $\kappa _{1}$
and from $1$ to $\kappa _{2}$ respectively where $\kappa _{1},\kappa _{2}\in 
\mathbb{N}^{\ast }$ are infinite numbers.

If $\frac{\pi }{L}$ is an irrational number then $0\notin \sigma (\widetilde{%
L})$ and hence, for every $f\in L^{1}(\Omega ),$ there is a unique solution $%
u_{\Lambda }$ of the following problem%
\begin{equation*}
\forall \varphi \in V_{\Lambda }(\Omega ),\ \int_{\Omega }\left( \frac{%
\partial ^{2}u}{\partial t^{2}}-\frac{\partial ^{2}u}{\partial x^{2}}\right)
\varphi dx=\int_{\Omega }f\varphi dx,
\end{equation*}%
which can be interpreted as a periodic solution of the D'Alembert equation.

$u_{\Lambda }$ can be written explicitly as follows:%
\begin{equation*}
u_{\Lambda }(t,x)=\frac{1}{\pi }\sum_{k=-\kappa _{1}}^{\kappa
_{1}}\sum_{l=1}^{\kappa _{2}}\frac{f_{k,l}}{(\frac{l\pi }{L})^{2}-k^{2}}\sin
(\frac{l\pi }{L}x)e^{ikt},  \label{somme}
\end{equation*}%
where%
\begin{equation*}
f_{k,l}=\frac{1}{\pi }\int_{\Omega }f(t,x)\sin (\frac{l\pi }{L}x)e^{ikt}dt\
dx.
\end{equation*}%
Notice that the hyperfinite sum (\ref{somme}) converges pointwise in the
field of hyperreal numbers while the corresponding real series, in general,
does not converge because of the presence of the "small denominators" $(%
\frac{l\pi }{L})^{2}-k^{2}.$

\subsection{A nonlinear problem}

Now, let us consider a typical case in nonlinear problems:

\begin{theorem}
\label{nl}Let $A:V(\Omega )\rightarrow W$, $W\subset V^{\prime }(\Omega ),$
be an operator such that for every finite dimensional space $V_{\lambda
}\subset V(\Omega )$ there exists $R_{\lambda }\in \mathbb{R}$ such that%
\begin{equation*}
u\in V_{\lambda };\ \left\Vert u\right\Vert _{\sharp }=R_{\lambda
}\Longrightarrow \left\langle A(u),u\right\rangle >0,  \label{gilda}
\end{equation*}%
where $\left\Vert \cdot \right\Vert _{\sharp }$ is any norm in $V(\Omega ).$
Then the equation 
\begin{equation*}
\widetilde{A}(u)=0  \label{nonno}
\end{equation*}%
has at least one solution $u_{\Lambda }\in V_{\Lambda }(\Omega ).$
\end{theorem}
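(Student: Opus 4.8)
The plan is to run the Faedo--Galerkin scheme in the same spirit as the proof of Theorem \ref{B}: for each $\lambda \in \mathfrak{L}$ solve a finite-dimensional version of the equation on $V_{\lambda }(\Omega )$ by a Brouwer-type argument, set $u_{\Lambda }=\lim_{\lambda \uparrow \Lambda }u_{\lambda }$, and transfer the weak formulation through Theorem \ref{billo}.

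For the finite-dimensional step, fix $\lambda $. Since $W\subset V^{\prime }(\Omega )$, for $u\in V_{\lambda }(\Omega )$ the map $v\mapsto \langle A(u),v\rangle =\int_{\Omega }A(u)v\,dx$ is a well-defined linear functional on $V_{\lambda }(\Omega )$, so there is a standard operator $A_{\lambda }:V_{\lambda }(\Omega )\rightarrow V_{\lambda }(\Omega )$ with $(A_{\lambda }u,v)=\langle A(u),v\rangle $ for all $v\in V_{\lambda }(\Omega )$, where $(\cdot ,\cdot )$ is the $L^{2}$-scalar product; this operator is continuous, which uses the continuity of $A$ on finite-dimensional subspaces --- a regularity assumption implicit in the statement and the only one entering the proof. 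Then $\left( A_{\lambda }u,u\right) =\langle A(u),u\rangle $, so the hypothesis says $\left( A_{\lambda }u,u\right) >0$ on the boundary $\{\left\Vert u\right\Vert _{\sharp }=R_{\lambda }\}$ of the compact convex body $C_{\lambda }=\{u\in V_{\lambda }(\Omega )\mid \left\Vert u\right\Vert _{\sharp }\leq R_{\lambda }\}$. By the "acute angle" consequence of Brouwer's fixed point theorem, $A_{\lambda }$ must have a zero $u_{\lambda }$ in $C_{\lambda }$: otherwise the continuous self-map $u\mapsto -R_{\lambda }A_{\lambda }(u)/\left\Vert A_{\lambda }(u)\right\Vert _{\sharp }$ of $C_{\lambda }$ would have a Brouwer fixed point $u_{0}\in \partial C_{\lambda }$, and then $A_{\lambda }(u_{0})=-cu_{0}$ with $c>0$ would force $\left( A_{\lambda }u_{0},u_{0}\right) <0$, contradicting the sign condition on $\partial C_{\lambda }$. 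Thus $\langle A(u_{\lambda }),v\rangle =0$ for every $v\in V_{\lambda }(\Omega )$. (When $V_{\lambda }(\Omega )=\{0\}$ the hypothesis is vacuous and $u_{\lambda }=0$ works.)

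Now set $u_{\Lambda }=\lim_{\lambda \uparrow \Lambda }u_{\lambda }\in V_{\Lambda }(\Omega )$ and apply Theorem \ref{billo} to the nets $\varphi (\lambda )=A(u_{\lambda })$ and $\psi (\lambda )=V_{\lambda }(\Omega )$ with the relation $x\,\mathcal{R}\,Y:=\ \forall v\in Y,\ \int_{\Omega }xv\,dx=0$. Since $\varphi (\lambda )\,\mathcal{R}\,\psi (\lambda )$ for every $\lambda \in \mathfrak{L}$, the Transfer Principle gives $\left( \lim_{\lambda \uparrow \Lambda }A(u_{\lambda })\right) \mathcal{R}^{\ast }V_{\Lambda }(\Omega )$, that is, $\int^{\ast }\left( \lim_{\lambda \uparrow \Lambda }A(u_{\lambda })\right) v\,dx=0$ for every $v\in V_{\Lambda }(\Omega )$. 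Because $A^{\ast }$ is the natural extension of $A$ and $u_{\Lambda }=\lim_{\lambda \uparrow \Lambda }u_{\lambda }$, we have $\lim_{\lambda \uparrow \Lambda }A(u_{\lambda })=A^{\ast }(u_{\Lambda })$, hence $\int^{\ast }A^{\ast }(u_{\Lambda })v\,dx=0$ for all $v\in V_{\Lambda }(\Omega )$; by the defining property of $\widetilde{A}$ this means $\int^{\ast }\widetilde{A}(u_{\Lambda })v\,dx=0$ for all $v\in V_{\Lambda }(\Omega )$. Letting $v$ run over a basis and using non-degeneracy of the scalar product on $V_{\Lambda }(\Omega )$, we conclude $\widetilde{A}(u_{\Lambda })=0$, so $u_{\Lambda }$ is the desired solution.

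The $\Lambda $-limit bookkeeping is routine; the genuinely substantive step is the finite-dimensional existence, and the point to watch is that the outward-pointing sign condition does force a zero of $A_{\lambda }$ via Brouwer, which is exactly where continuity of $A$ on the $V_{\lambda }$ is needed and why no further hypotheses on $A$ are required.
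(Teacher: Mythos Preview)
Your proof is correct and follows essentially the same Faedo--Galerkin route as the paper: define the finite-dimensional projection $A_{\lambda}$, use the sign condition on the sphere $\{\|u\|_{\sharp}=R_{\lambda}\}$ to obtain a zero $u_{\lambda}\in V_{\lambda}$, and then take the $\Lambda$-limit. The only difference is in the tool used for the finite-dimensional existence step: the paper invokes the Brouwer topological degree (concluding $\deg(A_{\lambda},B_{\lambda},0)=1$ from the outward-pointing condition), while you use the equivalent ``acute angle'' corollary of Brouwer's fixed point theorem directly. These are two standard packagings of the same fact, so the arguments are substantively identical; your version has the modest advantage of being self-contained and not requiring the reader to know degree theory, and you spell out the transfer step and the passage from the weak identity to $\widetilde{A}(u_{\Lambda})=0$ more explicitly than the paper does.
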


\begin{proof} If we set 
\begin{equation*}
B_{\lambda }=\left\{ u\in V_{\lambda }|\ \left\Vert u\right\Vert _{\sharp
}\leq R_{\lambda }\right\}
\end{equation*}%
and if $A_{\lambda }:V_{\lambda }\rightarrow V_{\lambda }$ is the operator
defined by the following relation:%
\begin{equation*}
\forall v\in V_{\lambda },\ \left\langle A_{\lambda }(u),v\right\rangle
=\left\langle A(u),v\right\rangle
\end{equation*}%
then, by (\ref{gilda}), it follows that $\deg (A_{\lambda },B_{\lambda
},0)=1,\ $where $\deg (\cdot ,\cdot ,\cdot )$ denotes the topological
degree (see e.g. \cite{AmMa2003}). Hence, $\forall \lambda \in \mathfrak{L}$,%
\begin{equation*}
\exists u\in V_{\lambda },\forall v\in V_{\lambda },\ \left\langle
A_{\lambda }(u),v\right\rangle =0.
\end{equation*}%
As usual, taking the limit we get a solution $u_{\Lambda }\in V_{\Lambda
}(\Omega )$ of eq. (\ref{nonno}).\end{proof}

\bigskip

\textbf{Example 1:} Let $\Omega $ be an open bounded in $\mathbb{R}^{N}$ set
and let $a(\cdot ,\cdot ,\cdot ),b(\cdot ,\cdot ,\cdot ):\mathbb{R}%
^{N}\times \mathbb{R}\times \overline{\Omega }\rightarrow \mathbb{R}^{N}$ be
a continuous functions such that $\forall \xi \in \mathbb{R}^{N},\forall
s\in \mathbb{R},\forall x\in \overline{\Omega }$ we have%
\begin{equation*}
a(\xi ,s,x)\cdot \xi +b(\xi ,s,x)\geq \nu \left( |\xi |\right) ,
\label{lalla}
\end{equation*}%
where $\nu $ is a monotone function 
\begin{equation*}
\nu \left( t\right) \rightarrow +\infty \ \text{for\ }t\rightarrow +\infty .
\label{lalla1}
\end{equation*}%
We consider the following problem:%
\begin{equation*}
\text{\textit{Find\ }\ }u\in \mathcal{C}_{0}^{2}(\Omega )\ \ \text{\textit{%
s.t.}}
\end{equation*}%
\begin{equation*}
\nabla \cdot a(\nabla u,u,x)=b(\nabla u,u,x).  \label{5th}
\end{equation*}%
In the framework of ultrafunctions this problem becomes%
\begin{equation*}
\text{\textit{Find} \ }u\in V_{0}^{2}(\Omega )\ \ \text{\textit{such that}}
\end{equation*}%
\begin{equation*}
\forall \varphi \in V_{0}^{2}(\Omega ),\ \int_{\Omega }\nabla \cdot a(\nabla
u,u,x)\ \varphi \ dx=\int_{\Omega }b(\nabla u,u,x)\varphi dx.  \notag
\end{equation*}%
If we set%
\begin{equation*}
A(u)=-\nabla \cdot a(\nabla u,u,x)+b(\nabla u,u,x)
\end{equation*}%
it is not difficult to check that (\ref{lalla}) and (\ref{lalla1}) are
sufficient to guarantee the assumptions of Th. \ref{nl} (with a suitable
Orliz norm) and hence the existence of a solution of problem (\ref{5th}).
Problem (\ref{5th}) covers well known situations such as the case in which $A
$ is a maximal monotone operator but also very pathological cases. E.g., if
one takes%
\begin{equation*}
a(\nabla u,u,x)=(|\nabla u|^{p-2}-1)\nabla u;\ b(\nabla u,u,x)=-f(x)
\end{equation*}%
one gets the problem to find\ $u\in V_{0}^{2}(\Omega )\ $such that%
\begin{equation*}
\forall \varphi \in V_{0}^{2}(\Omega ),\ \int_{\Omega }\left( \Delta
_{p}u-\Delta u\right) \ \varphi \ dx=\int_{\Omega }f^{\ast }\varphi dx. 
\notag
\end{equation*}%
Since%
\begin{equation*}
\int_{\Omega }\left( \Delta _{p}u-\Delta u\right) \ u\ dx=\left\Vert
u\right\Vert _{W_{0}^{1,p}}^{p}-\left\Vert u\right\Vert _{H_{0}^{1}}^{2},
\end{equation*}%
it is easy to check that we have a priori bounds (but not the convergence)
in $W_{0}^{1,p}(\Omega )$ and it seems interesting to study the kind of
regularity of the solutions.

\section{Application to physical problems involving material points}

\subsection{The notion of material point}

The notion of material point is a basic tool in Mathematical Physics since
the times of Euler, who introduced it. Even if material points (probably) do
not exist, nevertheless they are very useful in the description of nature
and they simplify the models so that they can be treated by mathematical
tools. However, as new notions entered in Physics (such as the notion of
field), the use of material points led to situations which required new
mathematics. For example, in order to describe the electric field generated
by a charged point we need the notion of Dirac measure $\delta _{q}$, namely
this field satisfies the following equation:%
\begin{equation}
\Delta u=\delta _{q},  \label{one}
\end{equation}%
where $\Delta $ is the Laplace operator.

In this section, we will describe two simple problems whose modellization
requires NAM. Let $\Omega \subset \mathbb{R}^{2}$ be an open bounded set
which represents (an ideal) membrane. Suppose that a material point $P$ is
placed in $\Omega $, let it be free to move.

Suppose that the point has a unit weight and that the only forces acting on
it are the gravitational force and the reaction of the membrane. If $q\in
\Omega $ is the position of the point and $u(x)$ represents the profile of
the membrane, it follows that equation (\ref{one}) holds in $\Omega $ with
boundary condition $u=0$ on $\partial \Omega .$

The question is:

\begin{center}
\textbf{which is the point} $q_{0}\in \Omega $ \textbf{that the particle
will occupy?}\\[0pt]
\end{center}

The natural way to approach this problem would be the following: for every $%
q\in \Omega $, the energy of the system is given by the elastic energy plus
the gravitational energy, namely%
\begin{equation}
E(u,q)=\frac{1}{2}\int_{\Omega }|\nabla u(x)|^{2}dx+u(q).  \label{integ}
\end{equation}%
If the couple $(u_{0},q_{0})$ minimizes $E$ then $q_{0}$ is the equilibrium
point. For every $q\in \Omega $, let $u_{q}(x)$ be the configuration when $P$
is placed in $q,$ namely the solution of equation (\ref{one}). So the
equilibrium point $q_{0}$ is the the point in which the function

\begin{equation}
F(q)=E(u_{q},q)
\end{equation}%
has a minimum.

In the classical context, this "natural" approach cannot be applied; in fact 
$u_{q}(x)$ has a singularity at the point $q$ which makes $u(q)$ not well
defined and the integral in (\ref{integ}) to diverge. On the contrary, this
problem can be treated in NAM as we will show. In fact, since infinite
numbers are allowed, we will be able to find a minimum configuration for the
energy (\ref{integ}).

\subsection{Equilibrium position of a material point}

Let $\Omega \subseteq \mathbb{R}^{N}$ be an open bounded set; we want to
find a function $u$ defined in $\Omega $ (with $u=0\ $on $\partial \Omega $)
and a point $q\in \Omega $ which minimize the functional%
\begin{equation*}
E(u,q)=\frac{1}{2}\int_{\Omega }|\nabla u(x)|^{2}dx+u(q).
\end{equation*}%
It is well known that this problem has no solution in $\mathcal{C}%
_{0}^{2}\left( \Omega \right) $ and it makes no sense in the space of
distributions. On the contrary it is well defined and it has a solution in $%
V_{0}^{2}(\Omega )=\left[ \mathcal{C}^{2}(\Omega )\cap \mathcal{C}^{0}(%
\overline{\Omega })\right] _{\Lambda }$. More exactly, we have the following
result:

\begin{theorem}
For every point $q\in \Omega ^{\ast },$ the Dirichlet problem%
\begin{equation}
\left\{ 
\begin{array}{cc}
\widetilde{\Delta }u=\delta _{q} & \text{for}\ \ x\in \Omega ^{\ast }, \\ 
u(x)=0 & \text{for\ }x\in \partial \Omega ^{\ast },%
\end{array}%
\right.  \label{caterina}
\end{equation}%
has a unique solution $u_{q}\in V_{0}^{2}(\Omega )$ whose energy $%
E(u_{q},q)\in \mathbb{R}^{\ast }$ is an infinite number; moreover there
exists $q_{\Lambda }\in \Omega ^{\ast }$ such that 
\begin{equation*}
E(u_{q_{\Lambda }},q_{\Lambda })=\ \underset{q\in \Omega ^{\ast }}{\min }%
E(u_{q},q)=\min\limits_{q\in \Omega ^{\ast },\ u\in V_{0}^{2}(\Omega
)}E(u,q).
\end{equation*}
\end{theorem}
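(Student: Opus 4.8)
The plan is to solve the Dirichlet problem at the finite--dimensional (Galerkin) level on each $V_{\lambda}(\Omega)$ and then pass to the $\Lambda$--limit, exactly in the spirit of Theorems \ref{B} and \ref{LS}. First I would fix a net $q_{\lambda}\in\Omega$ with $\lim_{\lambda\uparrow\Lambda}q_{\lambda}=q$ (which exists since $q\in\Omega^{\ast}$) and denote by $\delta_{q_{\lambda},\lambda}\in V_{\lambda}$ the Riesz representative of the evaluation $v\mapsto v(q_{\lambda})$ on the finite--dimensional inner product space $V_{\lambda}$, so that $\delta_{q}=\lim_{\lambda\uparrow\Lambda}\delta_{q_{\lambda},\lambda}$ by Theorem \ref{delta}. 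On $V_{\lambda}$ the functional $G_{p}(v)=\frac12\int_{\Omega}\left\vert\nabla v\right\vert^{2}dx+v(p)$ is strictly convex and coercive (on $V_{\lambda}\subset\mathcal{C}_{0}(\overline{\Omega})$ the quantity $\left(\int_{\Omega}\left\vert\nabla v\right\vert^{2}dx\right)^{1/2}$ is a norm), hence it has a unique minimizer $u_{\lambda}=u_{q_{\lambda},\lambda}$ whose Euler--Lagrange equation reads $\int_{\Omega}\nabla u_{\lambda}\cdot\nabla\varphi\,dx=-\varphi(q_{\lambda})$ for all $\varphi\in V_{\lambda}$, i.e., after integrating by parts, the Galerkin form of $\Delta u_{\lambda}=\delta_{q_{\lambda},\lambda}$. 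Setting $u_{q}=\lim_{\lambda\uparrow\Lambda}u_{\lambda}$ and invoking Theorem \ref{billo} (applied to the relation linking $(u_{\lambda},q_{\lambda})$ with $V_{\lambda}$), I would obtain that $u_{q}\in V_{0}^{2}(\Omega)$ is the unique minimizer of $u\mapsto E(u,q)$ over $V_{\Lambda}(\Omega)=V_{0}^{2}(\Omega)$ and the unique solution of the ultrafunction problem (\ref{caterina}); uniqueness also follows from the Fredholm alternative (Theorem \ref{LS}), since $\widetilde{\Delta}u=0$ forces $\int^{\ast}\left\vert\nabla u\right\vert^{2}dx=0$ (test with $u$), hence $u=0$. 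The boundary condition is automatic, every ultrafunction of $V_{0}^{2}(\Omega)$ vanishing on $\partial\Omega^{\ast}$.

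Next I would prove the identity $E(u_{q},q)=-\frac12\int^{\ast}\left\vert\nabla u_{q}\right\vert^{2}dx$. Indeed, testing (\ref{caterina}) against $u_{q}$ and using the ultrafunction Green identity (the transfer, along the lines of (\ref{manola}), of $\int_{\Omega}\Delta v\cdot w\,dx=-\int_{\Omega}\nabla v\cdot\nabla w\,dx$ for $v,w\in\mathcal{C}^{2}(\Omega)\cap\mathcal{C}_{0}(\overline{\Omega})$) gives $u_{q}(q)=\int^{\ast}u_{q}\,\delta_{q}\,dx=-\int^{\ast}\left\vert\nabla u_{q}\right\vert^{2}dx$, whence
\begin{equation*}
E(u_{q},q)=\frac12\int^{\ast}\left\vert\nabla u_{q}\right\vert^{2}dx+u_{q}(q)=-\frac12\int^{\ast}\left\vert\nabla u_{q}\right\vert^{2}dx .
\end{equation*}
So it remains to show that $\int^{\ast}\left\vert\nabla u_{q}\right\vert^{2}dx$ is infinite, equivalently that $\min_{V_{\lambda}}G_{q_{\lambda}}=-\frac12\int_{\Omega}\left\vert\nabla u_{\lambda}\right\vert^{2}dx\to-\infty$ on a qualified set. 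Here the dimension enters (take $N\geq2$, as in the membrane model): a point has null $2$--capacity, so for every $n\in\mathbb{N}$ and every $p$ in a relatively compact neighbourhood $U\subset\Omega$ of $sh(q)$ there is $v\in\mathcal{D}(\Omega)\subset V(\Omega)$ with $v(p)\leq-2n$ and $\int_{\Omega}\left\vert\nabla v\right\vert^{2}dx\leq1$, hence $G_{p}(v)\leq-n$; a routine covering argument on $\overline{U}$ lets me choose these $v$ from a finite family $v_{1},\dots,v_{k}$. If $\lambda_{0}$ is a finite index set with $v_{1},\dots,v_{k}\in V_{\lambda_{0}}$, then on the qualified set $Q(\lambda_{0})\cap\{\lambda:q_{\lambda}\in U\}$ I get $\min_{V_{\lambda}}G_{q_{\lambda}}\leq-n$; taking the $\Lambda$--limit, $\int^{\ast}\left\vert\nabla u_{q}\right\vert^{2}dx\geq2n$ for every $n$, so $E(u_{q},q)$ is a negative infinite number. (If $sh(q)\in\partial\Omega$ the same works after rescaling the test functions to the infinitesimal scale $\mathrm{dist}(q,\partial\Omega^{\ast})$, whose Dirichlet energy is still infinitesimal by null capacity.)

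For the minimizing point, I would look at $F_{\lambda}(p):=\min_{v\in V_{\lambda}}G_{p}(v)=-\frac12\int_{\Omega}\left\vert\nabla u_{p,\lambda}\right\vert^{2}dx$. Writing the Galerkin system in a basis of $V_{\lambda}$ gives $F_{\lambda}(p)=-\frac12\,b(p)^{T}S^{-1}b(p)$, where $S$ is the (invertible) Gram matrix of $\left(\int_{\Omega}\nabla e_{i}\cdot\nabla e_{j}\,dx\right)$ and $b(p)_{j}=e_{j}(p)$; hence $F_{\lambda}$ is continuous on the compact set $\overline{\Omega}$, equals $0$ on $\partial\Omega$ (there $b(p)=0$, since the basis lies in $\mathcal{C}_{0}$) and is $<0$ somewhere in $\Omega$ as soon as $V_{\lambda}\neq\{0\}$. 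Thus $F_{\lambda}$ attains its minimum at some $p_{\lambda}\in\Omega$, and I set $q_{\Lambda}=\lim_{\lambda\uparrow\Lambda}p_{\lambda}\in\Omega^{\ast}$. Applying Theorem \ref{billo} to ``$p$ minimizes $F_{\lambda}$ over $\overline{\Omega}$'' and noting that the internal function $\lim_{\lambda\uparrow\Lambda}F_{\lambda}$ equals $q\mapsto E(u_{q},q)$ on $\Omega^{\ast}$ and is identically $0$ on $\partial\Omega^{\ast}$, I conclude that $q_{\Lambda}$ minimizes $E(u_{q},q)$ over $\overline{\Omega}^{\ast}$; since that minimal value is negative infinite while the values on $\partial\Omega^{\ast}$ are $0$, the minimum over $\overline{\Omega}^{\ast}$ equals the one over $\Omega^{\ast}$, giving the first equality. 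Finally $u_{q}$ was produced as the minimizer of $E(\cdot,q)$ over $V_{0}^{2}(\Omega)$, so $\min_{u\in V_{0}^{2}(\Omega)}E(u,q)=E(u_{q},q)$ for each $q$; minimizing also over $q$ yields $\min_{q\in\Omega^{\ast},\,u\in V_{0}^{2}(\Omega)}E(u,q)=\min_{q\in\Omega^{\ast}}E(u_{q},q)=E(u_{q_{\Lambda}},q_{\Lambda})$.

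I expect the second paragraph to be the genuine obstacle: one must know a priori that the (ultrafunction) Green function has infinite Dirichlet energy and turn this into a statement about the finite--dimensional minima $\min_{V_{\lambda}}G_{q_{\lambda}}$ valid on a \emph{qualified} set and \emph{uniform} in the moving base point $q_{\lambda}$ — precisely where the null capacity of points, hence $N\geq2$, is used. Everything else (existence and uniqueness by convexity or by the Fredholm alternative, the reduction of the double minimum to a single one, and the compactness argument yielding $q_{\Lambda}$) is routine once the $\Lambda$--limit machinery of Sections \ref{lt} and \ref{effo} is available.
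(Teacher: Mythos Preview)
Your argument is correct and, in one respect, more complete than the paper's. The overall architecture is the same---solve finite-dimensional problems on $V_{\lambda}$ and pass to the $\Lambda$-limit via Theorem~\ref{billo}---but the implementations differ in two places.

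First, for the existence of the minimizing point the paper minimizes $E(u,q)$ \emph{jointly} over $V_{\lambda}(\Omega)\times\overline{\Omega}$ (continuous and coercive on a finite-dimensional closed set), then takes $q_{\Lambda}=\lim_{\lambda\uparrow\Lambda}q_{\lambda}$ and $u_{\Lambda}=\lim_{\lambda\uparrow\Lambda}u_{\lambda}$ simultaneously, observing afterwards that $u_{\Lambda}=u_{q_{\Lambda}}$. You instead minimize in two stages, first eliminating $u$ to obtain $F_{\lambda}(p)=\min_{v\in V_{\lambda}}G_{p}(v)$ and then minimizing this continuous function over $\overline{\Omega}$. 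The two routes are equivalent; the paper's joint minimization is a touch shorter, while your nested version makes the continuity in $p$ explicit via the Gram-matrix formula. For existence and uniqueness of $u_{q}$ the paper simply invokes Theorem~\ref{LS} (the Fredholm alternative for $\widetilde{\Delta}$), whereas you obtain it from strict convexity of $G_{p}$ on each $V_{\lambda}$; both are fine.

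Second---and this is the substantive point---the paper's proof does \emph{not} actually establish that $E(u_{q},q)$ is infinite; it only records that $E(u_{q},q)$ is negative (to rule out $q_{\Lambda}\in\partial\Omega^{\ast}$, using that $\delta_{q}=0$ there). Your capacity argument fills this gap: the identity $E(u_{q},q)=-\tfrac12\int^{\ast}|\nabla u_{q}|^{2}dx$ together with the null $2$-capacity of points in $\mathbb{R}^{N}$, $N\geq2$, forces the Dirichlet integral to be infinite. You are right that this step genuinely requires $N\geq2$; in dimension one the claim as stated would be false. Your handling of the case $sh(q)\in\partial\Omega$ by rescaling is somewhat compressed, but the idea is sound, and in any event the theorem is typically read with $q$ a standard interior point (or with $sh(q)\in\Omega$), where your uniform covering argument on $\overline{U}$ is entirely adequate.
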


\begin{proof} The existence of a solution $u_{q}\in V_{0}^{2}(\Omega )$
follows from Th. \ref{LS}.\textbf{\ }Now we observe that%
\begin{equation*}
\min\limits_{q\in \Omega ^{\ast }}E(u_{q},q)=\min\limits_{q\in \Omega ^{\ast
},\ u\in V_{\Lambda }(\Omega )}E(u,q).
\end{equation*}%
Thus, it is sufficient to minimize $E(u,q).$

$E(u,q)$ has a minimizer $\left( u_{\lambda },q_{\lambda }\right) \in
V_{\lambda }(\Omega )\times \overline{\Omega }$ since $E(u,q)$ is continuous
and coercive on the finite dimensional closed set $V_{\lambda }(\Omega
)\times \overline{\Omega }.$ Now set

\begin{equation*}
q_{\Lambda }=\lim_{\lambda \uparrow \Lambda }\ q_{\lambda };\ \ u_{\Lambda
}=\lim_{\lambda \uparrow \Lambda }\ u_{\lambda }.
\end{equation*}%
Arguing as in Th. \ref{B}, we have that $\left( u_{\Lambda },q_{\Lambda
}\right) \in V_{0}^{2}(\Omega )\times \overline{\Omega }^{\ast }$ is a
minimizer of $E(u,q)$ and $u_{\Lambda }$ solves eq. (\ref{caterina}) with $%
q=q_{\Lambda },$ and hence, $u_{\Lambda }=u_{q_{\Lambda }}.$

It remains to show that $q_{\Lambda }\notin \partial \Omega ^{\ast }.$ By
Th. \ref{delta}, 4, $\delta _{q}(x)$ is well defined also for $q\in \partial
\Omega ^{\ast }$ and, in this case, we have that $\delta _{q}(x)=0.$ It is
well known that for any $q\in \Omega ,$ $E(u_{q},q)$ is a negative number.
Thus, also $E\left( u_{\Lambda },q_{\Lambda }\right) $ is a negative number
and hence $q_{\Lambda }\notin \partial \Omega ^{\ast }$ since for every $%
q\in \partial \Omega ^{\ast },$ $E(u_{q},q)=0$.\end{proof}

\subsection{Equilibrium position of a charged material point in a box}

A similar problem that can be studied with the same technique is the problem
of an electrically charged pointwise free particle in a box. Representing
the box with an open bounded set $\Omega \subseteq \mathbb{R}^{3}$ and
denoting by $u_{q}$ the electrical potential generated by the particle
placed in $q\in \Omega $ we have that $u_{q}$ satisfies the Dirichlet problem%
\begin{equation*}
\left\{ 
\begin{array}{cc}
u\in V_{0}^{2}(\Omega ), &  \\ 
\widetilde{\Delta }u=\delta _{q} & \text{for}\ \ x\in \Omega .%
\end{array}%
\right. 
\end{equation*}%
The equilibrium point would be the point $q_{0}\in \Omega ^{\ast }$ that
minimizes the electrostatic energy which is given by%
\begin{equation*}
E_{el}(q)=\frac{1}{2}\int_{\Omega }|\nabla u_{q}(x)|^{2}dx.
\end{equation*}%
Notice that 
\begin{equation*}
E_{el}(q)=\int_{\Omega }\delta _{q}(x)u_{q}(x)dx-\frac{1}{2}\int_{\Omega
}|\nabla u_{q}(x)|^{2}dx,
\end{equation*}%
namely, on the solution, the electrostatic energy is the opposite of the
energy of a membrane-like problem in $\mathbb{R}^{3}$. In order to solve
this problem we notice that, by Th. \ref{delta}.4, we have that, for all $%
q\in \partial \Omega ,$ $\delta _{q}=0.$ So $E_{el}(q)\geq 0$ and $%
E_{el}(q)=0$ if and only if $q\in \partial \Omega .$ So the following
theorem holds:

\begin{theorem}
For every point $q\in \overline{\Omega }^{\ast },$ the Dirichlet problem%
\begin{equation}
\left\{ 
\begin{array}{cc}
\widetilde{\Delta }u=\delta _{q} & \text{for}\ \ x\in \Omega ^{\ast } \\ 
u(x)=0 & \text{for\ }x\in \partial \Omega ^{\ast }%
\end{array}%
\right.
\end{equation}%
is well defined and it has a unique solution $u_{q}\in V_{\Lambda }(\Omega
). $

\begin{itemize}
\item $E_{el}(q)\ $is infinite if the distance between $q\ $and $\partial
\Omega ^{\ast }$ is larger than a positive real number;

\item $E_{el}(q)$ is positive but not infinite for some $q\in \Omega ^{\ast
} $ infinitely close to $\partial \Omega ^{\ast };$

\item $E_{el}(q)=0$ if and only if $q\in \partial \Omega ^{\ast }.$
\end{itemize}

In particular the stable equilibrium positions of the charged material
points belong to the boundary $\partial \Omega ^{\ast }.$
\end{theorem}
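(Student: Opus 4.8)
The plan is to dispose of the four assertions in turn, the common machinery being Theorem~\ref{LS} (the Fredholm alternative), the integration-by-parts rule (\ref{manola}) in its $N$-dimensional form, and the variational characterisation of $u_q$ via Theorem~\ref{B}. Throughout I read the ambient space as $V_0^2(\Omega)=\left[\mathcal{C}^2(\Omega)\cap\mathcal{C}_0(\overline{\Omega})\right]_\Lambda$, in which the Dirichlet condition is built in.

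First I would settle well-posedness. For $u\in V_0^2(\Omega)$ integration by parts gives $\int^\ast(\widetilde{\Delta}u)\,u\,dx=-\int^\ast|\nabla u|^2\,dx$ (the boundary term vanishes since $u=0$ on $\partial\Omega^\ast$), so $\widetilde{\Delta}$ has trivial kernel: $\widetilde{\Delta}u=0$ forces $\int^\ast|\nabla u|^2\,dx=0$, and transferring this back to the approximating nets $u_\lambda\in V_\lambda$ and using connectedness of $\Omega$ yields $u\equiv 0$. By Theorem~\ref{LS} the equation $\widetilde{\Delta}u=\delta_q$ then has a unique solution $u_q\in V_0^2(\Omega)$. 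When $q\in\partial\Omega^\ast$, every element of $V_0^2(\Omega)$ vanishes at $q$, so $\delta_q=0$ by Theorem~\ref{delta}(4), hence $u_q=0$ and $E_{el}(q)=0$. Next I record the identity that governs the rest: testing $\widetilde{\Delta}u_q=\delta_q$ against $u_q$ gives $u_q(q)=\int^\ast\delta_q u_q\,dx=-\int^\ast|\nabla u_q|^2\,dx\le 0$, whence $E_{el}(q)=\tfrac12\int^\ast|\nabla u_q|^2\,dx=-\tfrac12 u_q(q)\ge 0$. Moreover $u_q$ is the unique minimiser over $V_0^2(\Omega)$ of $\mathcal{E}(u)=\tfrac12\int^\ast|\nabla u|^2\,dx+u(q)$ (its Euler--Lagrange equation being exactly $\widetilde{\Delta}u=\delta_q$, and $\mathcal{E}$ is the natural extension of a functional coercive on finite-dimensional spaces, so Theorem~\ref{B} applies), and substituting the identity, $\min_{V_0^2(\Omega)}\widetilde{\mathcal{E}}=\widetilde{\mathcal{E}}(u_q)=-E_{el}(q)$. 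This already gives part~(iv): if $q\notin\partial\Omega^\ast$ then $q\in\Omega^\ast$ and $\delta_q\ne 0$ (for an interior point $\|\delta_q\|^2=\delta_q(q)>0$, cf. Theorem~\ref{delta}), so $u_q\ne 0$, so $E_{el}(q)=\tfrac12\int^\ast|\nabla u_q|^2\,dx>0$ by triviality of the kernel; hence $E_{el}(q)=0\iff q\in\partial\Omega^\ast$.

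For the assertion that $E_{el}(q)$ is infinite when $\operatorname{dist}(q,\partial\Omega^\ast)>r$ for some real $r>0$ --- the heart of the theorem, and the only place the dimension $N=3$ is used --- I would exploit $-E_{el}(q)=\min\widetilde{\mathcal{E}}$ and drive $\mathcal{E}$ to $-\infty$ along explicit test functions. For $q\in\Omega$ standard, fix $B(q,\rho)\subset\Omega$ and for $n\in\mathbb{N}$ let $v_n\in\mathcal{C}^2(\Omega)\cap\mathcal{C}_0(\overline{\Omega})$ be supported in $B(q,\rho)$ and equal, up to a small fixed amplitude $a$, to a $\mathcal{C}^2$ truncation at scale $n^{-1}$ of the Newtonian potential $x\mapsto -|x-q|^{-1}$. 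In $\mathbb{R}^3$ a direct computation gives $\int_\Omega|\nabla v_n|^2\,dx=O(a^2n)$ while $v_n(q)\sim -an$, so for $a$ small enough $\mathcal{E}(v_n)=\tfrac12\int_\Omega|\nabla v_n|^2\,dx+v_n(q)\le -cn\to-\infty$ for a constant $c>0$. Each $v_n$ is a fixed standard function, hence eventually lies in $V_\lambda$; comparing it with the minimiser of $\mathcal{E}$ on $V_\lambda$ and passing to the $\Lambda$-limit yields $\widetilde{\mathcal{E}}(u_q)\le\mathcal{E}(v_n)$ for every standard $n$, so $-E_{el}(q)$ lies below every real number, i.e.\ $E_{el}(q)$ is infinite. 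For a non-standard interior $q$ one argues similarly, or more directly via the spectral representation $E_{el}(q)=\tfrac12\sum_j e_j(q)^2/|\mu_j|$: relating the first $N$ eigenpairs $(\mu_j,e_j)$ to the classical Dirichlet eigenpairs $(\lambda_j,\phi_j)$ by Galerkin convergence and using that $\min\{\sum_{j\le N}\phi_j(x)^2/\lambda_j:\operatorname{dist}(x,\partial\Omega)\ge r\}\to+\infty$ (a Dini-type argument on the compact $r$-interior, encoding the blow-up of the Green-function self-energy in dimension three), one gets $E_{el}(q)$ larger than any real number. I expect this divergence to be the main obstacle; the remaining parts are essentially bookkeeping.

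For part~(iii), I would pick $q_0\in\partial\Omega$ from which a short segment enters $\Omega$ (for a box, any interior point of a face), put $\gamma(t)=q_0+tr\nu$ for $t\in[0,1]$ with $\nu$ the inward direction and $r>0$ a small real, so $\gamma([0,1])\subset\overline{\Omega}$. The map $q\mapsto E_{el}(q)=\tfrac12\|\nabla(\widetilde{\Delta})^{-1}\delta_q\|^2$ is internal and internally continuous on $\overline{\Omega}^\ast$ (a composition of internal continuous maps, using $\delta_q=\sum_j e_j(q)e_j$), hence $h(t):=E_{el}(\gamma(t))$ is internal and continuous on $[0,1]^\ast$ with $h(0)=0$ and $h(1)$ a positive infinite number; by the Intermediate Value Theorem (via Transfer) there is $t_\ast\in(0,1)$ with $h(t_\ast)=1$. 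Then $q_\ast:=\gamma(t_\ast)$ has $E_{el}(q_\ast)=1$, positive and finite; by~(iv) $q_\ast\notin\partial\Omega^\ast$, and by part~(ii) $q_\ast$ is not at positive real distance from $\partial\Omega^\ast$, so $q_\ast\in\Omega^\ast$ is infinitely close to $\partial\Omega^\ast$. Finally, since $E_{el}\ge 0$ on $\overline{\Omega}^\ast$ and, by~(iv), vanishes exactly on $\partial\Omega^\ast$, the global minimum of $E_{el}$ equals $0$ and is attained precisely on the boundary; thus every energy-minimising (stable) equilibrium position of the particle belongs to $\partial\Omega^\ast$.
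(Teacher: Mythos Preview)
Your argument is substantially more detailed than what the paper offers. The paper's own treatment consists only of the sentence preceding the theorem: from Theorem~\ref{delta}(4) one has $\delta_q=0$ for $q\in\partial\Omega^\ast$, hence $E_{el}\ge 0$ with equality on the boundary; the three bullet points and the existence/uniqueness of $u_q$ are then simply stated, the latter presumably by analogy with the membrane theorem just proved. In particular the paper gives no argument for the first two bullets --- that $E_{el}(q)$ is infinite at positive real distance from the boundary, and that intermediate finite values occur near the boundary. Your test-function construction for the first and your Intermediate-Value-Theorem argument along an inward segment for the second are the natural ways to fill these gaps, and they are correct in outline.

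One place deserves more care: for a \emph{nonstandard} interior point $q$ at real distance $r>0$ from $\partial\Omega^\ast$, the test functions $v_n$ centred at $q$ are themselves nonstandard and need not lie in $V_\Lambda(\Omega)$, so the comparison $\widetilde{\mathcal E}(u_q)\le\mathcal E(v_n)$ is not immediate, and your spectral alternative is only sketched. The clean fix is a uniformity argument on the standard level: for fixed $n$ your radial profile gives $\mathcal E(v_n^{(p)},p)\le -c\,n$ \emph{uniformly} for all standard $p$ with $\operatorname{dist}(p,\partial\Omega)\ge r$, and by compactness of $\{p:\operatorname{dist}(p,\partial\Omega)\ge r\}$ together with continuity in $p$ a finite family $v_n^{(p_1)},\dots,v_n^{(p_k)}$ already witnesses this inequality on the whole set. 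Hence the statement ``for every $p$ in the $r$-interior there exists $v\in V_\lambda$ with $\mathcal E(v,p)\le -c\,n$'' holds eventually in $\lambda$ and transfers to $V_\Lambda$ and $\Omega^\ast$, which gives the conclusion for every such $q\in\Omega^\ast$. With this adjustment your proof is complete and goes well beyond what the paper supplies.
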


\bigskip

\medskip
Received xxxx 20xx; revised xxxx 20xx.
\medskip


\begin{thebibliography}{99}

\bibitem{AmMa2003} A. Ambrosetti and A. Malchiodi, Nonlinear Analysis and Semilinear Elliptic Problems, Cambridge University Press, Jan 4, 2007.


\bibitem{AmRa1973} A. Ambrosetti and P. H. Rabinowitz, \emph{Dual Variational
Methods in Critical Point Theory and Applications,} J. Funct. Anal., \textbf{14} (1973), 349--381.

\bibitem{benci95} V. Benci, \emph{A construction of a nonstandard universe}%
, in "Advances of Dynamical Systems and Quantum Physics" (S. Albeverio et
al., eds.), World Scientific, Singapore, (1995), 11--21.

\bibitem{benci99} V. Benci, \emph{An algebraic approach to nonstandard
analysis, }in "Calculus of Variations and Partial differential equations", (G.Buttazzo, et al., eds.), Springer, Berlin, (1999),
285--326.

\bibitem{ultra} V. Benci, \emph{Ultrafunctions and generalized solutions,}
Advanced Nonlinear Studies, \textbf{13} (2013), 461--486, arXiv 1206.2257.

\bibitem{BDN2003} V. Benci and M. Di Nasso, \emph{Alpha-theory: an elementary
axiomatic for nonstandard analysis}, Expositiones Mathematicae, \textbf{21}
(2003), 355--386.

\bibitem{BGG} V. Benci, S. Galatolo and M. Ghimenti, \emph{An elementary
approach to Stochastic Differential Equations using the infinitesimals}, in
"Contemporary Mathematics, \textbf{530}, Ultrafilters across Mathematics",
American Mathematical Society, (2010), 1--22.

\bibitem{nap} V. Benci, L. Horsten and S. Wenmackers, \emph{Non-Archimedean
Probability}, Milan J. Math., \textbf{81} (2012), 121--151, arXiv 1106.1524.


\bibitem{algebra} V. Benci and L. Luperi Baglini, \emph{An Algebra of
ultrafunctions and distributions,} in preparation.

\bibitem{belu2012} V. Benci and L. Luperi Baglini, \emph{A model problem for
ultrafunctions}, to appear on the proceedings of "Variational and
Topological Methods", Flagstaff, 2012, EJDE, arXiv 1212.1370.

\bibitem{belu2013} V. Benci and L. Luperi Baglini, \emph{Basic Properties of
ultrafunctions,} to appear on the proceedings of WNLDE 2012, PNLDE,
Birkhauser, arXiv 1302.7156.

\bibitem{brown} D.J. Brown and A. Robinson, \emph{Nonstandard exchange
economies}, Econometrica \textbf{43} (1974),  41--55.

\bibitem{col85} J.F. Colombeau, \emph{Elementary introduction to new
generalized functions,} North-Holland, Amsterdam, (1985).

\bibitem{DBR} P. Du Bois-Reymond, \emph{\"{U}ber die Paradoxen des Infinit%
\"{a}r-Calc\"{u}ls}, Math. Annalen, \textbf{11} (1877), 150--167.

\bibitem{el06} Ph. Ehrlich, \emph{The Rise of non-Archimedean Mathematics
and the Roots of a Misconception I: The Emergence of non-Archimedean Systems
of Magnitudes,} Arch. Hist. Exact Sci., \textbf{60} (2006), 1--121.


\bibitem{goze} M. Goze and R. Lutz, \emph{Nonstandard analysis, a practical
guide with applications}, Springer Lect. Notes Math. \textbf{881} (1981).

\bibitem{gro02} M. Grosser, M. Kunzinger, M. Oberguggenberger and R. Steinbauer; \emph{Geometric Theory of Generalized Functions with Applications to
General Relativity,} Springer Series Mathematics and Its Applications, Vol.
537, (2002).

\bibitem{H99} O. Heaviside, \emph{Electromagnetic theory. Including an
account of Heaviside's unpublished notes for a fourth volume,} Chelsea
Publishing Company, Incorporated, (1971). 

\bibitem{hilb} D. Hilbert, \emph{Grundlagen der Geometrie,} (1899); in
English, \emph{The Foundations of Geometry}, 2nd ed., The Open Court
Publishing Company, Chicago (1980).

\bibitem{keisler76} H.J. Keisler, \emph{Foundations of Infinitesimal
Calculus}, Prindle, Weber \& Schmidt, Boston (1976).

\bibitem{LC} T. Levi-Civita, \emph{Sugli infiniti ed infinitesimi attuali
quali elementi analitici},\ Atti del R. Istituto Veneto di Scienze Lettere
ed Arti, Venezia, (Serie 7) \textbf{4} (1892--93), 1765--1815.


\bibitem{nelson} E. Nelson, \emph{Internal Set Theory: A new approach to
nonstandard analysis}, Bull. Amer. Math. Soc., \textbf{83} (1977), 1165--1198.

\bibitem{rob} A. Robinson, 1961, \emph{Non-standard Analysis,}\
Proceedings of the Royal Academy of Sciences, Amsterdam (Series A) 64, pp.
432--440.

\bibitem{sa59} M. Sato, \emph{Theory of Hyperfunctions, I,} Journal of the
Faculty of Science, University of Tokyo. Sect. 1, Mathematics, astronomy,
physics, chemistry, \textbf{8} (1) (1959), 139--193.

\bibitem{sa60} M. Sato, \emph{Theory of Hyperfunctions, II,} Journal of
the Faculty of Science, University of Tokyo. Sect. 1, Mathematics,
astronomy, physics, chemistry, \textbf{8} (2) (1960), 387--437.

\bibitem{Sw51} L. Schwartz, \emph{Th\'{e}orie des distributions}, Hermann,
Paris, 2 vols., (1950/1951), new edn. 1966.

\bibitem{Sw66} L. Schwartz, \emph{Mathematics for the physical sciences.}
Hermann, Paris, (1966).

\bibitem{veronese} G. Veronese, \emph{Il continuo rettilineo e l'assioma V
di Archimede},\ Memorie della Reale Accademia dei Lincei, Atti della Classe
di scienze naturali, fisiche e matematiche, \textbf{6} (1889), 603--624.
\end{thebibliography}
\end{document}